\documentclass[a4paper,12pt]{article}

\usepackage{cmap}

\usepackage{graphicx}
\usepackage{subfigure}
\usepackage{grffile}
\usepackage{etoolbox}
\usepackage{amsmath,amsthm,amssymb}
\usepackage[T2A]{fontenc}
\usepackage[english, russian]{babel}
\usepackage[cp1251]{inputenc}
\usepackage{amsfonts,amssymb,mathrsfs,amscd}

\usepackage{hyperref} 

\usepackage{xcolor} 

\usepackage{comment} 

\frenchspacing \unitlength=1mm

\tolerance = 3000

\oddsidemargin=0.3cm \textwidth=15.5cm \topmargin=-1.7cm
\textheight=690pt


  \newskip\prethm \prethm3.0pt plus1.3pt minus.4pt
  \newskip\posthm \posthm2.7pt plus1.4pt minus.3pt
  \newtheoremstyle{STATEMENT}%
       {\prethm}{\posthm}{\itshape}{\parindent}{\scshape}
       {.}{.6em plus.2em minus.1em}{}
  \newtheoremstyle{EXPLANATION}%
       {\prethm}{\posthm}{}{\parindent}{\scshape}
       {.}{.6em plus.2em minus.1em}{}
\theoremstyle{STATEMENT}

\newtheorem{theorem}{Теорема}[section]

\newtheorem{lemma}{Лемма}[section]
\newtheorem{assertion}{Утверждение}[section]
\newtheorem{corollary}{Следствие}[section]

\theoremstyle{EXPLANATION}
\newtheorem{definition}{Определение}[section]
\newtheorem{remark}{Замечание}[section]

\numberwithin{equation}{subsection}


%



%
%


\title{Топологическая классификация слоений Лиувилля для интегрируемого
случая Ковалевской на алгебре Ли so(4)}
\author{В.\,А.~Кибкало}
\date{03.04.2018}


\begin{document}

\maketitle

Работа выполнена при поддержке Российского научного фонда, (грант № 17-11-01303).


\section{Введение}\label{s0}

С.В.~Ковалевской в знаменитой работе \cite{Kowalewski1889Acta12} был открыт новый случай интегрируемости уравнений динамики тяжелого твердого тела при всех значениях постоянной площадей. Данная система, рассмотренная на двойственном пространстве к алгебре Ли $e(3)$, была вложена И.В.~Комаровым \cite{Komarov81} в семейство динамических систем на пучке $so(3,1)-e(3)-so(4)$ с вещественным параметром $\varkappa$. Значению  $\varkappa =0$ соответствуют алгебра Ли $e(3)$ и интегрируемая система, называемая \textit{классическим случаем Ковалевской}. В работе исследуется случай алгебры Ли $so(4)$, который реализуется при $\varkappa > 0$. Соответствующую интегрируемую систему будем называть  \textit{компактным случаем Ковалевской}, т.к. для алгебры Ли $so(4)$ совместные уровни функций Казимира компактны. Особое множество и его топология изучались в работе И.К.~Козлова \cite{Kozlov14}

Для исследований классического случая Ковалевской использовались разнообразные методы. Например, Г.Г. Аппельротом в работе \cite{Appelrot40} были классифицированы особые орбиты с алгебраической точки зрения. М.П.~Харламовым в работах  \cite{Harlamov83Dan}, \cite{Harlamov83App}, \cite{Harlamov88} были развиты новые подходы, основанные на идеях Смейла исследования фазовой топологии и конструкциях симплектической геометрии.

Интересен ответ на вопрос, как устроены замыкания решений общего положения на неособых уровнях энергии для интегрируемых систем, открытых в физике и механике. Оказывается, совпадение таких замыканий решений двух систем задается отношением их лиувиллевой эквивалентности.

В работах А.Т.~Фоменко и его школы изучаются слоения Лиувилля вполне интегрируемых систем  на неособых трехмерных многообразиях $Q^3$ (см. работы \cite{KuNF08}, \cite{FKo12}, \cite{KuF12}, \cite{FKo14}, \cite{FNi15}, \cite{FF15}, \cite{VF17}). Эти поверхности расслоены на неособые торы и особые слои уровня дополнительного интеграла $F$. Две системы $M^4_i, v_i = \mathrm{sgrad}\, H_i$ на неособых $Q^3_i, i=1, 2$ называют \textit{лиувиллево эквивалентными}, если существует послойный диффеоморфизм $Q^3_1$ на $Q^3_2$, который сохраняет ориентации особых окружностей интеграла $F$. Теория топологической классификации слоений Лиувилля и ее приложения подробно изложены в книгах А.В.~Болсинова и А.Т.~Фоменко \cite{BF99}.

Классы лиувиллевой эквивалентности различаются построенным в работе \cite{FZ90} инвариантом Фоменко--Цишанга (меченой молекулой). Это конечный граф, оснащенный буквами и некоторыми числовыми метками $r, \varepsilon, n$. Его ребра соответствуют семействам регулярных торов. Вершины соответствуют особым слоям и обозначены символами 3-атомов (типов перестроек торов Лиувилля в окрестности этого особого слоя). Инвариант Фоменко--Цишанга без числовых меток, называемый графом Фоменко или грубой молекулой, также является важным инвариантом топологии слоения Лиувилля.

\begin{theorem}[$\left(\textrm{А.Т.\,Фоменко, Х.\,Цишанг}\right)$]\label{T:Fomenko_Zieschang}
 Слоения Лиувилля на неособых $Q^3$ лиувиллево эквивалентны, в точности если совпадают их инварианты Фоменко--Цишанга.
\end{theorem}
\begin{remark}
Таким образом, две нерезонансные системы имеют одинаковые замыкания решений общего положения, в точности если совпадают инварианты Фоменко--Цишанга их слоений Лиувилля.
\end{remark}

Перечисление инвариантов Фоменко--Цишанга системы и их сопоставление неособым уровням энергии называют \textit{лиувиллевым анализом}.

В работе \cite{BFR00}\, А.В.~Болсиновым, П.~Рихтером и А.Т.~Фоменко лиувиллев анализ был проведен для классического случая Ковалевской. В этой системе имеется ровно $10$ неэквивалентных слоений.

\begin{theorem}\label{T:invariants}
В системе Ковалевской на алгебре Ли $so\mathrm{(4)}$ имеется ровно $27$ классов $L_1, ..., L_{27}$ лиувиллево неэквивалентных слоений на связных компонентах неособых изоэнергетических поверхностей.

В таблицах \ref{Tab:Molecules_classes_labeled_1} и \ref{Tab:Molecules_classes_labeled_2} перечислены инварианты Фоменко--Цишанга всех слоений на неособых $Q^3_{a, b, h}$ при фиксированной ориентации $Q^3$ и направлении роста интеграла $K$. Классы лиувиллевой эквивалентности $L_1, \dots, L_{27}$ для связных компонент этих слоений $1$-$32$ указаны в таблице \ref{Tab:Liouville_classes_graphs}.
\end{theorem}

Пунктирная линия в таблицах \ref{Tab:Molecules_classes_labeled_1} и \ref{Tab:Molecules_classes_labeled_2} означает совпадение меток слева и справа от нее, т.е. симметрию слоения Лиувилля и его меченой молекулы на данных ``ярусах'', т.е. особых уровнях дополнительного интеграла.

\begin{theorem}\label{Cor:Liouville_equiv}
Среди слоений $L_1, \dots, L_{27}$ компактного случая Ковалевской следующие слоения лиувиллево эквивалентны слоениям известных интегрируемых систем в некоторых зонах энергии: 

1) слоения $L_1, L_{12}, L_3, L_4, L_{15}, L_{27}, L_{24}, L_{20}, L_{24}, L_{18}$ эквиваленты слоениям $A,\, ...\, ,J$ классического случая Ковалевской (см. \textrm{(\cite{BFR00})}) соответственно,

2) слоения $L_{2}$ и $L_{23}$ эквивалентны слоениям случая Ковалевской--Яхьи  в зонах энергии $h_{2}$ и $h_{10}, h_{23}$ (см.  \cite{Morozov07}, \cite{Slavina14}) соответственно,

3) слоения $L_1, L_2, L_9, L_{10}$ эквивалентны слоениям $1, 2, 6, 7$ случая Клебша (см. \cite{Morozov02}) соответственно,

4) слоения $L_1, L_2, L_4$ эквивалентны слоениям $A, B, F$ случая Соколова (см. \cite{Morozov04}) соответственно,


5) интегрируемые биллиарды в областях $A_0', A_2, A_1, A_0$, ограниченных дугами софокусных квадрик (см. \cite{Fokicheva15}), моделируют слоения Лиувилля $L_1, L_2, L_6, L_8$ компактного случая Ковалевской.
\end{theorem}

\begin{figure}[!htb]
\minipage{1.1\textwidth}
\includegraphics[width=\linewidth]{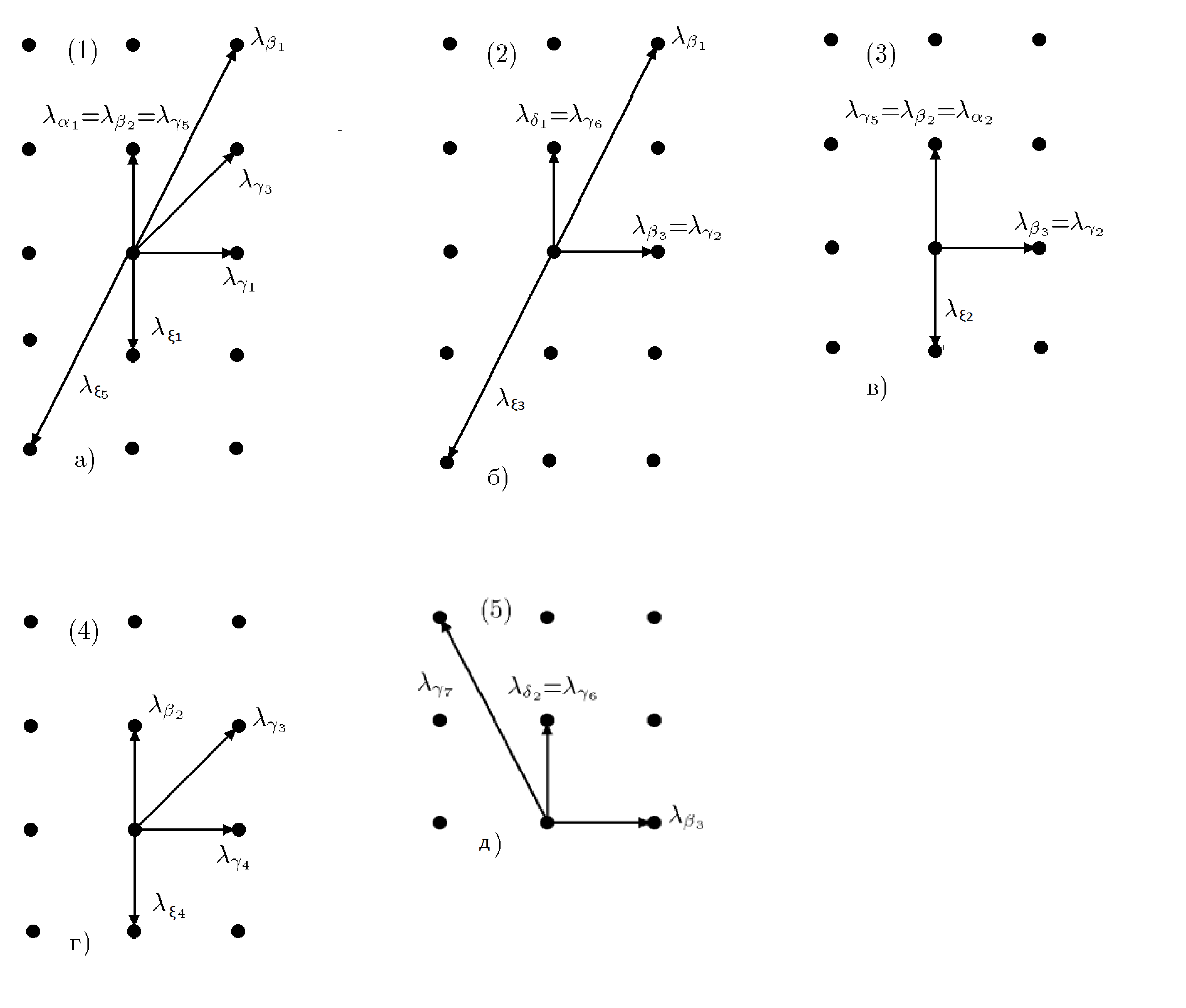}
   \caption{Циклы допустимых базисов для компактного случая Ковалевской} \label{Fig:Lambda_cycles}
\endminipage
\end{figure}

При проведении лиувиллева анализа мы используем результаты из работ \cite{BFR00} и \cite{Kozlov14}. В \cite{BFR00} для классического случая Ковалевской были найдены допустимые базисы для дуг бифуркационных диаграмм и вычислен инвариант Фоменко--Цишанга для слоений изоэнергетических поверхностей. В работе \cite{Kozlov14} для компактного случая Ковалевской были описаны все бифуркационные диаграммы отображения момента.

Таким образом, для нахождения всех меченых молекул компактного случая Ковалевской остается сделать следующее:

\begin{enumerate}

\item выразить через $\lambda$-циклы, известные из \cite{BFR00}, допустимые базисы для дуг $\xi_1, \dots, \xi_5$ (см. таблицу \ref{Table_arcs}) бифуркационных диаграмм, которые не возникали в классическом случае Ковалевской. Это сделано в утверждении~\ref{A:admiss_coord_systems}. На рисунке \ref{Fig:Lambda_cycles} однозначно определенные $\lambda$-циклы для дуг $\alpha_1, \dots, \xi_5$ изображены как элементы целочисленной решетки на плоскости.

\item описать, в каком порядке вдоль оси $H$ могут располагаться особые точки бифуркационных диаграмм в зависимости от параметров --- значений функций Казимира. Имеется лишь конечное число вариантов, все они приведены в таблицах \ref{Tab:order_region_b0}-\ref{Tab:order_region_I}. Интервалу между соседними особыми точками сопоставлен номер молекулы. Отметим, что в таблице \ref{Tab:order_region_II} для краткости не указано ``тривиальное окончание'' $6, z_2, 7, z_1$.
\end{enumerate}

Для дуг, сохраняющихся в классическом случае Ковалевской, мы используем те же обозначения $\alpha_1, \dots, \delta_2$ и те же базисы, что и в \cite{BFR00}. Для особых точек мы используем те же обозначения $y_i, z_j$, что и в \cite{Kozlov14}. Области I-IX плоскости значений функций Казимира, описанные в \cite{Kozlov14}, изображены на рисунках \ref{Fig:Areas_Big}-\ref{Fig:Areas_Small}. Полученное в настоящей работе их разбиение на подобласти изображено на рисунках \ref{Fig:Plane_Sep_uv_big}-\ref{Fig:sep_set_small}. Дуги, разделяющие подобласти, описаны в утверждении~\ref{A:Sep_set_1_arcs}.

Автор благодарит Анатолия Тимофеевича Фоменко и Ивана Константиновича Козлова за постановку задачи и помощь при написании работы.

\section{Постановка задачи} \label{s1}

\subsection{Динамические системы на пучке алгебр Ли} И.В.~Комаров в работе \cite{Komarov81} обнаружил, что на пучке алгебр Ли $so(4)-e(3)-so(3, 1)$ с вещественным параметром $\varkappa$ можно задать семейство интегрируемых систем с тем же параметром $\varkappa$, содержащее при $\varkappa =0$ классический случай Ковалевской, т.е. известный случай интегрируемости уравнений движения тяжелого твердого тела. Приведем ниже описание предложенной им конструкции.

Скобку Пуассона $\{ \cdot  , \cdot \} $ функций $f$ и $g$ на двойственном пространстве $\mathfrak{g}^*$ к конечномерной вещественной алгебре Ли $\mathfrak{g}$ можно задать следующим образом:
\begin{equation}
з\label{Eq:Lie_Poisson_Bracket} \{ f, g\} = \langle x, [df|_x, \,
dg|_x ]\rangle.
\end{equation} Запись $\langle \cdot  , \cdot \rangle $ обозначает спаривание ковектора из $\mathfrak{g}^*$ и вектора из $\mathfrak{g}$. Коммутатор в алгебре Ли  $\mathfrak{g}$ записан как $[ \cdot, \cdot ]$. Отметим, что мы пользовались каноническим отождествлением $\mathfrak{g}^{**}$ и $\mathfrak{g}$.

Динамическая система на $\mathfrak{g}^*$ в линейных координатах $x_1, \dots, x_n$ задается гладкой функцией $H$ с помощью уравнений Эйлера: \begin{equation}
\label{Eq:Lie_Poisson_Bracket_1} \dot{x}_i = \{ x_i, H
\},\end{equation}

Зададим на пространстве $\mathbb{R}^6$ с координатами $(\mathbf{J}, \mathbf{x}) = (J_1, J_2, J_3, x_1, x_2, x_3)$ семейство скобок Ли-Пуассона c параметром $\varkappa \in \mathbb{R}$, определяющее пучок алгебр Ли $\textrm{so}(3,1) - \textrm{e}(3) - \textrm{so}(4)$:

\begin{equation} \label{Eq:SO4_Poisson_Lie_bracket_1} \{J_i, J_j\} =
\varepsilon_{ijk}J_k, \quad \{J_i, x_j\} = \varepsilon_{ijk}x_k,
\quad \{x_i, x_j\} = \varkappa \varepsilon_{ijk}J_k, \end{equation}
где $\varepsilon_{ijk}$ --- знак перестановки
$\{123\} \rightarrow \{ijk\}$. Случай $\varkappa>0$ соответствует алгебре Ли $\textrm{so}(4)$, случай
$\varkappa=0$
--- алгебре Ли $\textrm{e}(3)$, а $\varkappa<0$ --- алгебре Ли $\textrm{so}(3, 1)$. Случай $\varkappa <0$ в настоящей работе не рассматривается.

Функции Казимира скобки Пуассона \eqref{Eq:SO4_Poisson_Lie_bracket_1} имеют вид:
\begin{equation}\label{Eq:Kasimirs}  f_1 = (x_1^2 + x_2^2 + x_3^2) + \varkappa (J_1^2 +J_2^2 +J_3^2),
\qquad  f_2 = x_1 J_1 + x_2 J_2 +x_3 J_3.
\end{equation}

В случае $\varkappa >0$ при $a \ge 2\sqrt{\varkappa}|b|$ и в случае $\varkappa =0$ при $a >0$ совместная поверхность уровня
\begin{equation}\label{Eq:Common_Level_surface} M_{a, b} = \{ (\textbf{J}, \textbf{x})| \quad  f_1
(\textbf{J}, \textbf{x}) = a, \, f_2 (\textbf{J}, \textbf{x}) = b
\}
\end{equation}
функций Казимира является орбитой коприсоединенного представления и симплектическим листом скобки \eqref{Eq:SO4_Poisson_Lie_bracket_1}. При $\varkappa \ge 0$ регулярными являются орбиты $M_{a, b}$, для которых $a > 2\sqrt{\varkappa}|b|$. Они диффеоморфны прямому произведению двумерных сфер $S^2\times S^2$ для $\varkappa >0$ и кокасательному расслоению к двумерной сфере $T^{*}S^2$ для $\varkappa =0$. При $\varkappa >0$ орбита сингулярна, если $a = 2\sqrt{\varkappa}|b|$. Орбита $M_{0, 0}$ является точкой, а остальные сингулярные орбиты диффеоморфны $S^2$. Слоение на них описано в лемме \ref{L:singular_orbits}. Парам $a <2\sqrt{\varkappa}|b|$ не соответствует ни одной орбиты.

И.В. Комаровым было найдено возмущение $K$ известного интеграла Ковалевской, находящееся в инволюции с гамильтонианом $H$ при всех $\varkappa \in \mathbb{R}$:
\begin{equation}
    \label{Eq:Hamiltonian}
    H = J_1^2 + J_2^2 + 2J_3^2 + 2 c_1 x_1,
\end{equation}
\begin{equation}\label{Eq:First_Integral} K = (J_1^2 - J_2^2-2c_1 x_1 + \varkappa
c_1^2)^2 + (2J_1 J_2 - 2 c_1 x_2)^2,
\end{equation} где $c_1$ --- произвольная постоянная. Можно считать, что $c_1 = 1$ и $\varkappa \in \{-1, 0, 1\}$.

\subsection{Интегрируемые системы на орбитах}
При всех $\varkappa \in \mathbb{R}$ первые интегралы $H$ и $K$ функционально независимы и находятся в инволюции относительно скобки \eqref{Eq:SO4_Poisson_Lie_bracket_1}. Т.е. на каждой регулярной орбите задана вполне интегрируемая по Лиувиллю гамильтонова система с двумя степенями свободы.

Отображение момента $\mathfrak{F} = (H, K): M^4_{a, b} \rightarrow \mathbb{R}^2(h, k)$ определяет на каждой орбите \textit{слоение Лиувилля}. Слоение на регулярной орбите удобно описывать с помощью бифуркационных диаграмм $\Sigma^{a, b}$ отображения $\mathfrak{F}$:

\[\Sigma^{a, b} = \{(h, k)| \,\,\exists x_0 \in \mathfrak{F}^{-1}(h, k), \,\,\, rk\, \mathfrak{F}|_{x_0} < 2 \}.\]

Бифуркационная диаграмма $\Sigma^{a, b}$ лежит в $\mathbb{R}^2(h, k)$ и состоит в случае $\varkappa \ge 0$ из конечного числа гладких дуг (возможно, неограниченных) и особых точек. Напомним, что \textit{особыми точками бифуркационной диаграммы} называют точки пересечения, касания и возврата или потери гладкости кривых, содержащих дуги $\Sigma^{a, b}$, т.е. концы этих дуг. 

\begin{remark}
Система имеет симметрию $(\textbf{J}, \textbf{x}) \rightarrow (-\textbf{J}, \textbf{x})$, переводящую точки совместного уровня $(a, b, h, k)$ функций $f_1, f_2, H, K$ в точки совместного уровня $(a, -b, h, k)$. Тем самым, бифуркационные диаграммы $\Sigma^{a, -b}$ и $\Sigma^{a, +b}$ состоят из одних и тех же точек плоскости $Ohk$, прообразы которых при отображении момента устроены одинаково. Далее считаем, что $b \ge 0$.
\end{remark}

Напомним, что  кривую $\gamma$ без самопересечений в плоскости $Ohk$ называют \textit{допустимой}, если она не проходит через особые точки $\Sigma^{a, b}$ и пересекает ее дуги трансверсально.

\textit{Изоэнергетическими поверхностями} $Q_{a, b, h}$ называют совместные поверхности уровня функций Казимира $f_1, f_2$ и гамильтониана $H$
\[Q_{a, b, h} = \{(\textbf{J}, \textbf{x})| \quad  f_1
(\textbf{J}, \textbf{x}) = a, \, f_2 (\textbf{J}, \textbf{x}) = b, \, H(\textbf{J}, \textbf{x}) = h\}.\]

Назовем $Q_{a, b, h}$ и соответствующую тройку $(a, b, h)$ \textit{неособыми}, если образ $\mathfrak{F}(Q_{a, b, h})$ является допустимой кривой. Отметим, что образ неособой $Q_{a, b, h}$ является отрезком, лежащим на вертикальной прямой $H =h$ плоскости $Ohk$, поскольку орбиты $M^4_{a, b}$ компактны в случае алгебры Ли $so(4)$. Неособая $Q_{a, b, h}$ является гладким трехмерным подмногообразием в $M^4_{a, b}$ без границы. Остальные непустые $Q_{a, b, h}$ будем называть \textit{особыми}.

\subsection{Результаты И.К.~Козлова для случая $so(4)$}

В работе И.К. Козлова \cite{Kozlov14} анализировался случай $\varkappa >0$. Были найдены четыре кривые, в объединении которых содержится бифуркационная диаграмма.

 \begin{lemma} \label{L:Curve_Images_B_not_0_Kappa_not_0}
Пусть $b \ne 0$ и $\varkappa \ne 0$. Тогда для любой регулярной орбиты
$M_{a, b}$ (такой, что $a^2 - 4 \varkappa
b^2 > 0$) бифуркационная диаграмма $\Sigma^{a, b}$ интегрируемой
системы с гамильтонианом \eqref{Eq:Hamiltonian} и
интегралом \eqref{Eq:First_Integral} содержится в объединении
следующих трёх семейств кривых на плоскости $\mathbb{R}^{2}(h,k)$:
  \begin{equation} \label{Eq:line_K_0}  \,\, 1. \quad \,\, \textrm{Прямая}  \qquad \qquad \qquad  \qquad \qquad
 k=0,  \qquad \qquad   \qquad \qquad \qquad \qquad \end{equation}
\begin{enumerate}
\item[2.] \quad Параметрическая кривая,
\begin{equation} \label{Eq:Parametric Curve} h(z)= \frac{b^2 c_1^2}{z^2}+2 z, \quad k(z)= \left(4 a
c_1^2-\frac{4 b^2 c_1^2}{z}+\frac{b^4 c_1^4}{z^4}\right)-2 \varkappa
c_1^2 h(z) +\varkappa^2 c_1^4, \end{equation} где $z \in \mathbb{R}
- \{0\}$.
\item[3.] \quad Объединение двух парабол \begin{equation} \label{Eq:Left Parabola} k =
\left(h-\varkappa c_1^2 -\frac{a}{\varkappa } +\frac{\sqrt{a^2-4
\varkappa b^2 }}{\varkappa }\right)^2 \end{equation} и
\begin{equation} \label{Eq:Right Parabola} k = \left(h-\varkappa c_1^2
-\frac{a}{\varkappa } - \frac{\sqrt{a^2-4 \varkappa b^2 }}{\varkappa
}\right)^2. \end{equation} \end{enumerate}
\end{lemma}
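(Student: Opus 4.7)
The bifurcation diagram $\Sigma^{a,b}$ is the image under $\mathfrak{F}=(H,K)$ of the critical set
\[
\mathcal{C} = \{\, p \in M_{a,b}\,:\, dH(p),\, dK(p),\, df_1(p),\, df_2(p)\ \text{are linearly dependent}\,\}.
\]
By Lagrange multipliers, $\mathcal{C}$ is cut out by the six scalar equations
\[
\alpha\, dH + \beta\, dK + \mu_1\, df_1 + \mu_2\, df_2 = 0, \qquad (\alpha,\beta,\mu_1,\mu_2)\ne 0,
\]
supplemented by $f_1 = a$ and $f_2 = b$. My plan is to split $\mathcal{C}$ into three disjoint pieces and to identify their images with \eqref{Eq:line_K_0}, \eqref{Eq:Parametric Curve} and \eqref{Eq:Left Parabola}--\eqref{Eq:Right Parabola}.

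The opening move is to pass to the Kowalewski complex variables $z_1 = J_1 + i J_2$, $w_1 = x_1 + i x_2$ and observe that
\[
\zeta := z_1^2 - 2 c_1 w_1 + \varkappa c_1^2 = K_1 + i K_2, \qquad K = |\zeta|^2 \ge 0,
\]
so that $dK = \bar\zeta\, d\zeta + \zeta\, d\bar\zeta$. The whole locus $\{\zeta = 0\}$ therefore has $dK \equiv 0$ and lies in $\mathcal{C}$ irrespective of the remaining variables. This yields Family~1, the ray $k=0$ of \eqref{Eq:line_K_0}; its non-emptiness under $a^2 - 4 \varkappa b^2 > 0$ reduces to a routine real-solvability check for $z_1^2 = 2 c_1 w_1 - \varkappa c_1^2$ combined with the Casimir relations.

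For Family~2 ($\zeta \ne 0$, non-equilibrium branch) I would exploit the fact that $\zeta$ is independent of $(J_3, x_3)$. The $\partial_{J_3}$ and $\partial_{x_3}$ components of the Lagrange equation then involve only the multipliers and the pair $(J_3, x_3)$, and linearly determine two of the multipliers together with a natural scalar parameter $z$. Substituting back into the four remaining Lagrange equations and eliminating $(J_1, J_2, x_1, x_2)$ by means of $f_1=a$, $f_2=b$ and the two remaining Lagrange relations should, after routine but lengthy manipulations, yield the rational parametrisation \eqref{Eq:Parametric Curve}. The standard Kowalewski trick of rewriting the system in the unit-modulus phase variable $s = \bar\zeta/|\zeta|$ essentially linearises the elimination and is the route I would follow, guided by the analogous $so(4)$ computation in \cite{Kozlov14}.

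Family~3 corresponds to the equilibria of the Hamiltonian flow $v_H$. Setting all the Poisson brackets $\{J_i,H\} = \{x_i,H\} = 0$ and analysing the resulting algebraic system isolates the branch $J_2 = x_2 = x_3 = J_3 = 0$ constrained by $x_1^2 + \varkappa J_1^2 = a$ and $x_1 J_1 = b$; the resolvent for $(x_1^2,\, \varkappa J_1^2)$ is a quadratic with discriminant $a^2 - 4\varkappa b^2$. Substituting its two pairs of roots into $h = J_1^2 + 2 c_1 x_1$ and $\sqrt{k} = |J_1^2 - 2 c_1 x_1 + \varkappa c_1^2|$ eliminates $(x_1, J_1)$ and produces precisely \eqref{Eq:Left Parabola} and \eqref{Eq:Right Parabola}; the hypothesis $a^2 - 4 \varkappa b^2 > 0$ appears exactly as the reality condition for these equilibria. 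The main obstacle I anticipate is the elimination step in Family~2, where a misplaced sign or factor can invalidate the explicit parametrisation; completeness of the three families then follows from the trichotomy $\zeta = 0\,/\,$equilibrium$\,/\,$generic together with a rank analysis of the residual Lagrange system in the last case.
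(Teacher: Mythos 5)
The paper does not actually prove this lemma: it is reproduced from I.K.~Kozlov's work \cite{Kozlov14} (the surrounding subsection and Theorem~\ref{T:Bif_Diag} are explicitly attributed to that paper), so there is no in-text proof to compare your route against, and I can only judge your proposal on its own merits.

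Your decomposition of the critical set is sensible, and the two easy pieces do check out. On $\{\zeta=0\}$ one has $dK=0$, so these points are critical and their image lies on $k=0$; and the equilibria with $J_2=x_2=x_3=J_3=0$ do land on \eqref{Eq:Left Parabola}--\eqref{Eq:Right Parabola}: writing $X=x_1^2$, $Y=\varkappa J_1^2$ one gets $X+Y=a$, $XY=\varkappa b^2$, and a direct computation gives $h-\varkappa c_1^2-a/\varkappa+(x_1^2-\varkappa J_1^2)/\varkappa=-(J_1^2-2c_1x_1+\varkappa c_1^2)=\mp\sqrt{k}$, which is one of the two parabolas according to the sign of $x_1^2-\varkappa J_1^2=\pm\sqrt{a^2-4\varkappa b^2}$. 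One factual slip: the equilibrium equations $\{J_i,H\}=\{x_i,H\}=0$ do \emph{not} isolate that branch. They also admit the branch $J_2=x_2=0$, $x_3=J_1J_3/c_1$, $2c_1x_1=J_1^2+\varkappa c_1^2$ with $J_3\ne0$ (harmless, since there $\zeta=0$, so its image is on $k=0$) and the branch $x_2=x_3=J_3=0$, $x_1=\varkappa c_1$, $J_2\ne0$, which maps to the common point $\bigl(a/\varkappa+\varkappa c_1^2,\,(a^2-4\varkappa b^2)/\varkappa^2\bigr)$ of the two parabolas; a complete proof has to list and dispatch these cases rather than assert uniqueness of the branch.

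The genuine gap is Family~2, which is the actual content of the lemma. You never carry out the elimination that is supposed to produce \eqref{Eq:Parametric Curve}: the parameter $z$ is not identified in terms of the phase variables or of the Lagrange multipliers, the claimed linear determination of two multipliers from the $\partial_{J_3}$, $\partial_{x_3}$ components is not exhibited, and no rank analysis is given showing that \emph{every} critical point with $\zeta\ne 0$ which is not an equilibrium is sent into that specific curve (this is exactly the completeness assertion you defer to ``routine but lengthy manipulations''). Since the line $k=0$ and the parabolas are the easy inclusions, while the parametric curve is precisely the part for which the result is cited from \cite{Kozlov14}, what you have is an outline with the central computation missing, not a proof; to close it you would need to either reproduce Kozlov's elimination explicitly or give an independent derivation of $h(z)$, $k(z)$ together with the argument that no further critical values arise.
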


Взаимное расположение этих кривых, попадание их точек в образ отображения момента и устройство прообразов особых точек существенно зависит от значений $(a, b)$. И.К.~Козловым установлено, что $5$ кривых $f_l, f_t, f_k, f_r, f_m$ делят множество $a \ge 2\sqrt{\varkappa} b, b >0$ на $9$ областей с одинаковым устройством бифуркационной диаграммы:

\begin{theorem}[$\left(\textrm{И.К.\,Козлов, \cite{Kozlov14}}\right)$] \label{T:Bif_Diag}
Пусть $\varkappa > 0$ и $b > 0$. Функции $f_k, f_r, f_m, f_t$ и
$f_l$ заданные формулами
\begin{gather}
\label{Eq: Function_F_K} f_k(b) = \frac{3 b^{4/3}+6 \varkappa
b^{2/3} c_1^{4/3} - \varkappa ^2c_1^{8/3} }{4 c_1^{2/3}} \\
\label{Eq:Function_F_r} f_r(b) = \frac{b^{4/3}}{c_1^{2/3}}+
\varkappa b^{2/3}c_1^{2/3}
 \\ \label{Eq:Function_F_m} f_m(b)= \frac{b^2}{\varkappa c_1^2} +
 \varkappa^2c_1^2 \\ \label{Eq:Triple_Intersect} f_t(b) = \left(\frac{\varkappa c_1^2 + t^2}{2 c_1} \right)^2 + \varkappa t^2,
\qquad \text{где} \quad b =t \left(\frac{\varkappa c_1^2 + t^2}{2
c_1} \right) \\ \label{Eq:Function_F_l} f_l (b) = 2 \sqrt{\varkappa}
|b|
\end{gather} делят множество орбит $\{b>0, a \ge 2 \sqrt{\varkappa} b\} \subset
\mathbb{R}^2(a,b)$ на $9$ областей (см. рис. \ref{Fig:Areas_Big} и
\ref{Fig:Areas_Small}).
\end{theorem}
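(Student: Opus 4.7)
The plan is to study how the bifurcation diagram $\Sigma^{a,b}$ from Lemma~\ref{L:Curve_Images_B_not_0_Kappa_not_0}, consisting of the line $k=0$, the rational curve \eqref{Eq:Parametric Curve}, and the two parabolas \eqref{Eq:Left Parabola}--\eqref{Eq:Right Parabola}, changes combinatorially as $(a,b)$ varies over $\{b>0,\ a\ge 2\sqrt{\varkappa}\,b\}$. Its combinatorial type is locally constant in $(a,b)$ and can only jump along curves in the parameter plane that correspond to geometric degenerations among the four components of $\Sigma^{a,b}$. The theorem therefore reduces to identifying these degeneration curves and checking that their complement has exactly $9$ connected components.

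First, I would classify the degenerations exhaustively: (i)~the orbit $M_{a,b}$ itself becomes singular, which occurs precisely on $a=2\sqrt{\varkappa}\,b$ and produces $f_l$ (the two parabolas also collide there); (ii)~the rational curve \eqref{Eq:Parametric Curve} develops a cusp, i.e.\ some $z$ with $h'(z)=k'(z)=0$; (iii)~the rational curve becomes tangent to one of the two parabolas; (iv)~three smooth components of $\Sigma^{a,b}$ pass through a single point. Next I would translate each condition into an equation on $(a,b)$. For~(ii), $h'(z)=2-2b^{2}c_1^{2}/z^{3}$ vanishes only at $z^{3}=b^{2}c_1^{2}$; substituting this into $k'(z)=0$ and eliminating $z$ yields $a=f_r(b)$ in the form~\eqref{Eq:Function_F_r}. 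For~(iii), the tangency with a parabola is equivalent to the one-variable polynomial $k(z)-P(h(z))$, where $P$ is the corresponding quadratic, having a double root in $z$; the two resulting discriminant equations give, respectively, $f_m$ and $f_k$ in the forms~\eqref{Eq:Function_F_m} and~\eqref{Eq: Function_F_K}. For~(iv), forcing the rational curve to pass through the unique common point of the two parabolas reproduces precisely the parametric system~\eqref{Eq:Triple_Intersect} defining $f_t$.

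Once the five critical curves are assembled, I would verify that outside a finite set of exceptional points (where two distinct degenerations happen simultaneously) the graphs of $f_l, f_k, f_r, f_m, f_t$ are smooth and pairwise transverse, so their union is a one-dimensional stratified subset of the $(a,b)$-plane. A direct comparison of their leading asymptotics as $b\to 0^{+}$ and $b\to\infty$, combined with a count of pairwise intersections, shows that the complement inside $\{b>0,\,a>2\sqrt{\varkappa}\,b\}$ has exactly $9$ connected components, matching those depicted in Figs.~\ref{Fig:Areas_Big} and~\ref{Fig:Areas_Small}. Choosing a single representative $(a,b)$ in each region and drawing $\Sigma^{a,b}$ confirms that the $9$ combinatorial types are pairwise distinct.

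The main technical obstacle is step~(iii): the tangency of the rational curve \eqref{Eq:Parametric Curve} with the parabolas leads to a resultant of rather high degree in $z$, and the clean closed forms \eqref{Eq: Function_F_K} and \eqref{Eq:Function_F_m} only emerge after a nontrivial polynomial factorization. A convenient device is to keep the contact value $z_0$ as the unknown: the tangency condition then becomes a one-variable double-root condition that can be solved explicitly, which avoids brute-force bivariate elimination and simultaneously delivers the geometric meaning of each critical curve.
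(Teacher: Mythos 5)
You should first note that the paper itself does not prove Theorem~\ref{T:Bif_Diag}: it is quoted from \cite{Kozlov14} as background, so there is no internal proof to compare with, and your proposal has to be judged against what the five curves actually are, which can be read off from Lemma~\ref{L: h_coord_points}, Assertion~\ref{A:Curves_sep_set_2} and Table~\ref{Tab:sep_curves_2.1}. Your overall wall-crossing frame (the combinatorial type of $\Sigma^{a,b}$ is locally constant in $(a,b)$ and changes only on degeneration curves, whose complement one then counts) is the right idea and agrees in spirit with \cite{Kozlov14} and with the finer analysis of $\Theta$ in this paper. The gap is in the central step: your list (ii)--(iv) of degenerations is wrong, and two of the three conditions you propose are identically satisfied, so they define no curves in the $(a,b)$-plane at all.

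Concretely, for the curve \eqref{Eq:Parametric Curve} one has $h'(z)=2-2b^2c_1^2z^{-3}$, and at $z^3=b^2c_1^2$ the derivative $k'(z)$ vanishes automatically; this is the point $z_{cusp}$ of Lemma~\ref{L: h_coord_points}, so the parametric curve has a cusp for \emph{every} $(a,b)$ with $b\ne0$, your condition (ii) imposes no restriction on $(a,b)$ and cannot yield $a=f_r(b)$. Likewise the parametric curve is tangent to each parabola for all admissible parameters (the points $lt$, $rt$ with $z_{lt}=u/(2\varkappa)$, $z_{rt}=v/(2\varkappa)$ always exist), so the double-root condition in (iii) is vacuous and produces neither $f_m$ nor $f_k$. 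Item (iv) is misattributed: requiring \eqref{Eq:Parametric Curve} to pass through the common point of the parabolas \eqref{Eq:Left Parabola}, \eqref{Eq:Right Parabola} amounts to $h_{+r}=h_{int}$ or $h_{+l}=h_{int}$, i.e. $u=\tau^2$ or $v=\tau^2$, which is exactly $f_m$ (curves $f_9$, $f_{10}$ of Table~\ref{Tab:sep_curves_2.1}), not $f_t$. The actual meanings are: $f_k$ is where the cusp lies on the line $k=0$, i.e. $k(a,b,z_{cusp})=0$; $f_r$ is where the cusp lies on one of the parabolas (the points $cusp$, $lt$, $+l$, resp. $cusp$, $rt$, $+r$, merge --- curves $f_4$, $f_5$); $f_m$ is the triple point through the intersection of the two parabolas; $f_t$ (note the label of \eqref{Eq:Triple_Intersect}) is where the parametric curve passes through a parabola vertex lying on $k=0$, i.e. $h_l=h_{+l}$ (curve $f_{14}$); only $f_l$ is as you describe. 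Your taxonomy also omits all interactions with the line $k=0$, which is precisely what $f_k$ and $f_t$ encode, so it is not exhaustive; with the walls misidentified, neither the derivation of the explicit formulas nor the count of the $9$ regions can be carried out as proposed.
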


\begin{figure}[!htb]
\minipage{0.48\textwidth}
\includegraphics[width=\linewidth]{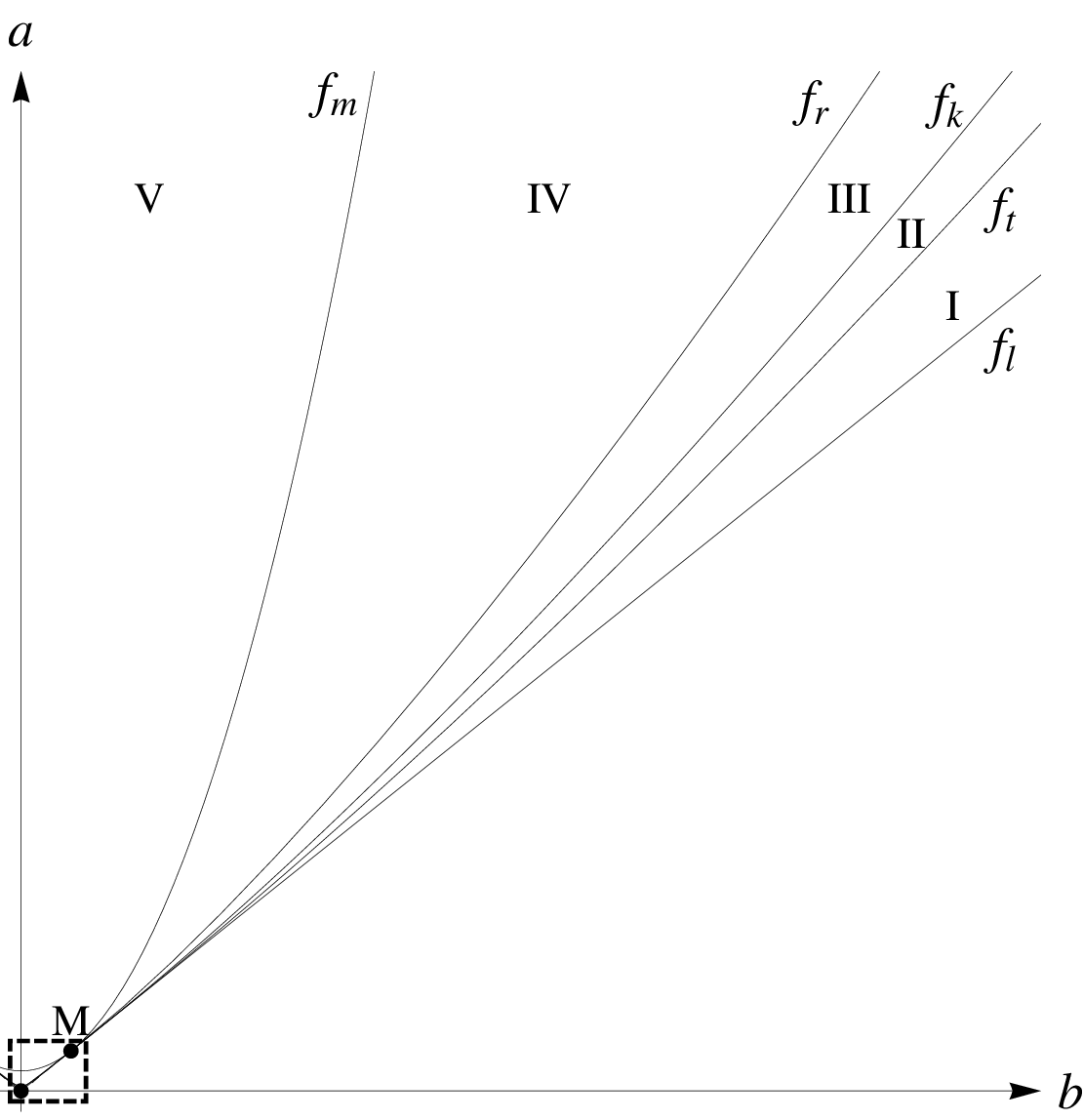}
   \caption{Разбиение области параметров} \label{Fig:Areas_Big}
\endminipage
\hspace{0.04\textwidth}
\minipage{0.48\textwidth}
\includegraphics[width=\linewidth]{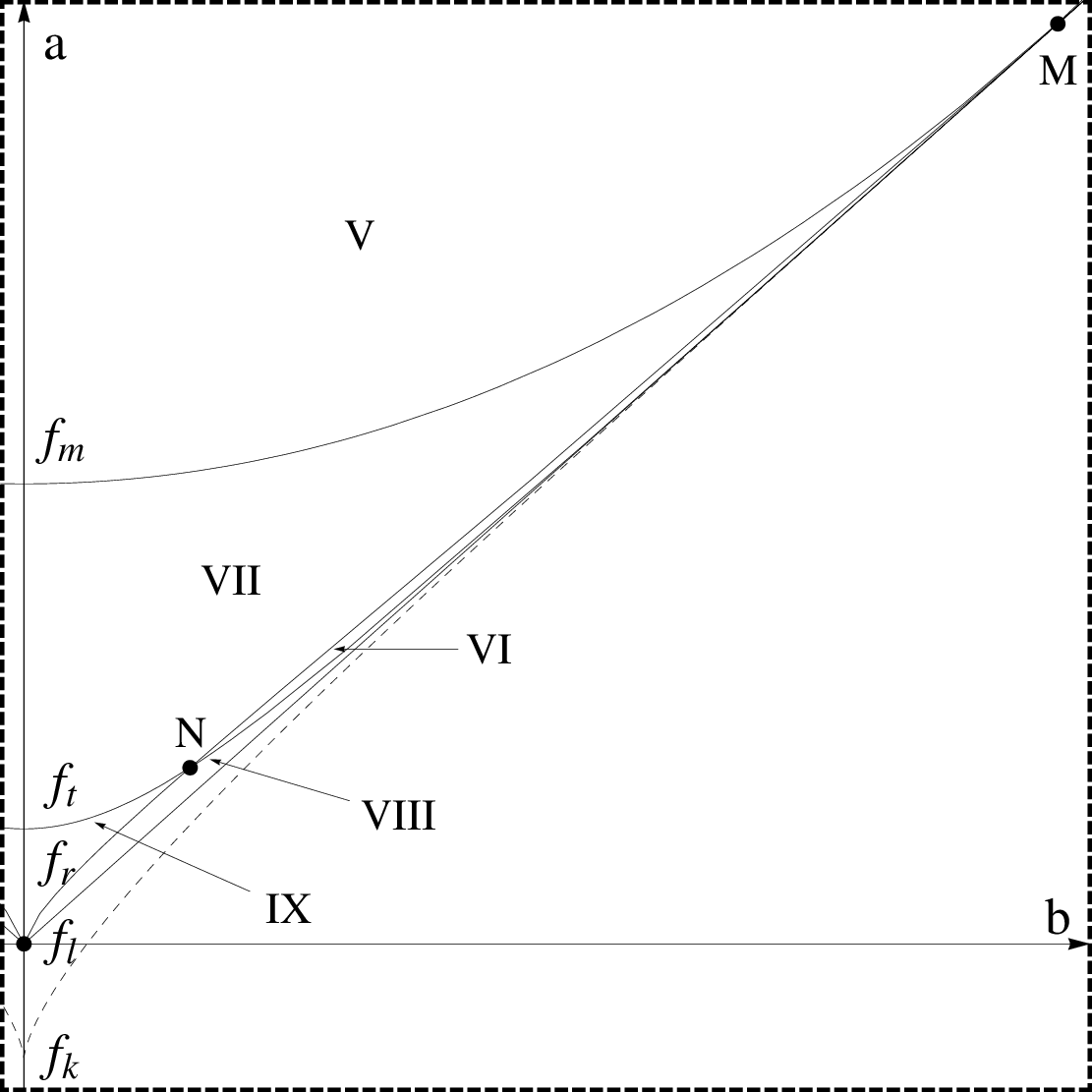}
   \caption{Увеличенный фрагмент рис. \ref{Fig:Areas_Big}.} \label{Fig:Areas_Small}
\endminipage
\end{figure}

Аналогичные утверждения были доказаны для луча $b =0,\, a >0$. Кроме этого, были построены круговые молекулы особых точек $y_1, ..., z_{11}$, проверена невырожденность критических точек отображения момента и установлена ``непрерывность''\, предельного перехода бифуркационных диаграмм $\Sigma^{a, b}(\varkappa)$ при $\varkappa \rightarrow +0$.

\begin{lemma} \label{L:Bif_Diag_Limit} Рассмотрим произвольные $a, b \in \mathbb{R}$, где
$a>0$.  Тогда точка $x$ принадлежит бифуркационной диаграмме $\Sigma
(a, b, 0)$ тогда и только тогда, когда существует последовательность
точек $x_n \in \Sigma(a_n, b_n, \varkappa_n)$ такая, что $\lim_{n
\to \infty} (a_n, b_n, \varkappa_n) = (a, b, 0)$.
\end{lemma}

Как следствие, из областей I-IX случая $\varkappa >0$ при предельном переходе ``выживают'' области I-IV, а из четырех кривых, содержащих $\Sigma^{a, b}(\varkappa)$ --- все кроме правой параболы \eqref{Eq:Right Parabola}.
При таком предельном переходе сохраняются круговая молекула и тип особой точки, если она не вырождается.
В таблице \ref{Tab:denotions_singular_points} показано соответствие обозначений для семейств особых точек в
 работах \cite{BFR00} и \cite{Kozlov14}. Отметим, что точка типа $y_4$, встречающаяся в $\Sigma^{a, b}$ при $b =0$, соответствует суперсингулярной точке $S_0$. Точка $z_7$ тоже присутствует только на орбитах с $b =0$, т.е. может быть названа суперсингулярной.

\section{Основные результаты} \label{s:Results}

\subsection{Выбор допустимых базисов}\label{s:Results_adm}

Для вычисления меток найдем допустимые базисы на граничных торах всех 3-атомов и вычислим матрицы перехода между ними, называемые  матрицами склейки. Общие правила выбора допустимых базисов изложены в \cite{BF99}. Для эллиптических атомов мы приведем эквивалентное описание перед утверждением \ref{A:Centre_Centre_Eps}.

По аналогии с работой \cite{BFR00} мы одновременно выберем допустимые базисы для всех перестроек и выразим их через однозначно определенные $\lambda$-циклы.  Для этого для каждой дуги бифуркационной диаграммы мы рассмотрим маленький вертикальный интервал $I$. Отметим, что в компактном случае Ковалевской у дуг бифуркационной диаграммы отсутствуют вертикальные касательные. После этого выберем допустимый базис для соответствующего 3-атома по правилам, сформулированным в \cite{BF99}.

Семействам дуг, для которых не существует аналогов в классическом случае Ковалевской, соответствует эллиптический атом $A$. Он диффеоморфен полноторию $S^1 \times D^2$, расслоенному на торы Лиувилля и одну особую минимальную или максимальную окружность. Базой такого расслоения является 2-атом А.
 Утверждение \ref{A:Centre_Centre_Eps} позволяет выбрать допустимые базисы на двух дугах, пересекающихся в особой точке, прообраз $\mathfrak{F}^{-1}$ которой содержит особенность типа центр-центр. Переформулируем ниже определение допустимого базиса для атома $A$ в более удобных для подсчета терминах.

\begin{definition}
Базис $(\lambda, \mu)$ в $\pi_1 (T^2)$ на граничном торе 3-атома А назовем допустимым, если
\begin{enumerate}
\item цикл $\lambda$ стягиваем,
\item ориентация цикла $\mu$ задана векторным
полем $\mathrm{sgrad}\, H$ на особом слое,%
\item базис $(\lambda, \mu)$ в $\pi_1(T^2)$ задает положительную ориентацию на граничном торе.
\end{enumerate}
 Базис $(u, v)$ в касательном пространстве $T_x T^2$ к граничному тору положительно ориентирован, если четверка векторов $(\textrm{grad}\,H, N, u, v)$ положительно ориентирована относительно формы объема $\omega \wedge \omega$. Здесь $N$ --- внешняя нормаль к $3$-атому. \end{definition}

Каждому фрагменту допустимой кривой, соединяющему концы двух вертикальных интервалов $I_i$ и не пересекающему дуги $\Sigma$, соответствует $2 \times 2$ матрица перехода от допустимого базиса на границе одного атома к допустимому базису на границе другого. Для изоэнергетической поверхности $H = \mathrm{const}$ определители таких матриц равны $-1$, а для поверхности $K = \mathrm{const}$ они равны $+1$. Можно доказать следующее общее утверждение о допустимых базисах для особенностей типа центр-центр.

\begin{assertion}\label{A:Centre_Centre_Eps}

Пусть точка является особой точкой типа центр-центр бифуркационной диаграммы. Обозначим знаки производных функции $H$ в направлении пересекающихся дуг $\gamma_i, i=1,2$ как $\varepsilon_i =\pm 1, i= 1,2$ соответственно. Тогда допустимые базисы $(\lambda_i, \mu_i)$ для этих дуг могут быть выбраны следующим образом:

\[  \begin{pmatrix} \lambda_2 \\ \mu_2 \end{pmatrix} =
    \begin{pmatrix} 0 & \varepsilon_1 \\ \varepsilon_2 & 0 \end{pmatrix}
    \begin{pmatrix} \lambda_1 \\ \mu_1  \end{pmatrix}
\]

\end{assertion}

По аналогии с работой \cite{BFR00} обозначим ``новые''\, дуги бифуркационной диаграммы символами $\xi_1, ..., \xi_5$. В таблице \ref{Table_arcs} эти дуги заданы своими концами $z_1, ..., z_{11}$. Семейства торов были пронумерованы в \cite{BFR00}.

\begin{assertion} \label{A:admiss_coord_systems}
 Следующие базисы $(\lambda_{\xi_i}, \mu_{\xi_i})$ являются допустимыми для дуг $\xi_i, i =1..5$:

  $\begin{pmatrix} \lambda_{\xi_1} \\ \mu_{\xi_1} \end{pmatrix} =
    \begin{pmatrix} \,\,1 & -1 \\ \,\,1 & \,\,\,\,0\,\,  \end{pmatrix}
    \begin{pmatrix} \,\,\lambda_{\gamma_1} \\ \,\,\lambda_{\gamma_3} \end{pmatrix}
  \qquad \begin{pmatrix} \lambda_{\xi_2} \\ \mu_{\xi_2} \end{pmatrix} =
    \begin{pmatrix} \,\,-1 & \,\,\,\,0\,\, \\ \,\,0 & \,\,\,\,1\,\,  \end{pmatrix}
    \begin{pmatrix} \,\,\lambda_{\beta_2} \\ \lambda_{\gamma_2} \end{pmatrix}
    $

 $\begin{pmatrix} \lambda_{\xi_3} \\ \mu_{\xi_3} \end{pmatrix} =
    \begin{pmatrix} \,\,0 & -1 \\ -1 & \,\,0  \end{pmatrix}
    \begin{pmatrix} \,\,\lambda_{\delta_1} \\ \,\,\lambda_{\beta_1} \end{pmatrix}
  \qquad %
  \begin{pmatrix} \lambda_{\xi_4} \\ \mu_{\xi_4} \end{pmatrix} =
    \begin{pmatrix} \,\,1 & -1 \\ \,\,1 & \,\,\,\,0\,\,  \end{pmatrix}
    \begin{pmatrix} \,\,\lambda_{\gamma_4} \\ \,\,\lambda_{\gamma_3} \end{pmatrix}
    $

$\begin{pmatrix} \lambda_{\xi_5} \\ \mu_{\xi_5} \end{pmatrix} =
    \begin{pmatrix} \,\,0 & -1 \\ \,\,1 & \,\,\,\,0\,\,  \end{pmatrix}
    \begin{pmatrix} \,\,\lambda_{\gamma_3} \\ \,\,\lambda_{\beta_1} \end{pmatrix}
    $
\end{assertion}

\begin{proof}
 \begin{enumerate}
 \item Рассмотрим орбиту, для которой $(a, b)$ лежит в области V.  Особая точка $z_4$ является точкой типа центр-центр. В обозначениях утверждения \ref{A:Centre_Centre_Eps} знаки производных $H$ в направлении дуг $\gamma_1$ и $\xi_1$ равны $\varepsilon_1 = -1$ и $\varepsilon_2 = 1$ соответственно. Cогласно ему, $\lambda_{\xi_1} =  p \lambda_{\gamma_1} - \lambda_{\gamma_3}$. Особая точка $z_{5}$ является точкой типа центр-седло, поэтому $\lambda_{\xi_1} = \pm \lambda_{\beta_2} = \pm (\lambda_{\gamma_3} - \lambda_{\gamma_1})$. Т.е. $p = 1$, и допустимый базис для $\xi_1$ имеет вид 
\[ \left( \begin{matrix} \lambda_{\xi_1} \\ \mu_{\xi_1} \end{matrix} \right) = \left( \begin{matrix} 1 & -1 \\ 1 & 0 \end{matrix} \right) \left( \begin{matrix} \lambda_{\gamma_1} \\ \lambda_{\gamma_3} \end{matrix} \right). \]

Аналогично, при $b =0$ получаем допустимый базис для дуги $\xi_4$ из анализа точки $z_7$ .

 \item Особая точка $z_2$ является вырожденной особой точкой ранга 1, поэтому можно выбрать $\mu_{\xi_2} = \lambda_{\gamma_2}$. Особая точка $z_{5}$ является точкой типа центр-седло, поэтому
\[ \left( \begin{matrix} \lambda_{\xi_2} \\ \mu_{\xi_2} \end{matrix} \right) = \left( \begin{matrix}
 \sigma & 0 \\ p & \sigma \end{matrix} \right)  \left( \begin{matrix} \lambda_{\beta_2} \\ -\lambda_{\gamma_2} \end{matrix} \right). \]

Получили, что $p = 0$ и $\sigma = -1$.

 \item Рассмотрим орбиту, для которой $(a, b)$ лежит в области $VII$.  Особая точка $z_{5}$ является точкой типа центр-седло, поэтому $\lambda_{\xi_3} = \pm \lambda_{\beta_1}$. Особая точка $z_1$ является точкой типа центр-центр, $\varepsilon_1 = -1$ и $\varepsilon_2 = -1$ для дуг $\delta_1$ и $\xi_3$ соответственно.  Тогда

\[ \left( \begin{matrix} \lambda_{\xi_3} \\ \mu_{\xi_3} \end{matrix} \right) = \left( \begin{matrix} 0 & -1 \\ -1 & 0 \end{matrix} \right) \left( \begin{matrix} \lambda_{\delta_1} \\ \lambda_{\beta_1} \end{matrix} \right). \]

 \item Особая точка $z_{10}$ является точкой типа центр-седло, поэтому $\lambda_{\xi_5} = \pm \lambda_{\beta_1}$. Особая точка $z_8$ является образом вырожденной одномерной орбиты, поэтому можно взять $\mu_{\xi_5} = \lambda_{\gamma_3}$.

 Рассмотрим орбиту, для которой $(a, b)$ лежит в области $VIII$. Особая точка $z_9$ является особой точкой типа центр-центр, $\varepsilon_1 = -1$ и $\varepsilon_2 = -1$ для дуг $\alpha_1$ и $\xi_5$ соответственно.  Тогда
\[ \left( \begin{matrix} \lambda_{\xi_5} \\ \mu_{\xi_5} \end{matrix} \right) = \left( \begin{matrix} 0 & -1 \\ 1 & 0 \end{matrix} \right) \left( \begin{matrix} \lambda_{\gamma_3} \\ \lambda_{\beta_1} \end{matrix} \right). \]
\end{enumerate}
Утверждение \ref{A:admiss_coord_systems} доказано.
\end{proof}

Инвариант Фоменко--Цишанга слоения Лиувилля в прообразе любой фиксированной допустимой кривой алгоритмически вычисляется по найденным выражениям для $\lambda_{\xi_i}, \mu_{\xi_i}$ и соотношениям из \cite{BFR00}. При вычислении меток $n$ для семей в $Q^3_{a, b, h}$ мы ориентировали ребра молекулы единообразно по возрастанию $K$ на прямой $H  = \mathrm{const}$. Напомним, что замена ориентации на $Q^3$ или знака дополнительного интеграла $K$ не меняет класс лиувиллевой эквивалентности.

\subsection{Классы слоений на неособых $Q^3_{a, b, h}$}\label{s:Results_param}

Напомним, что точке $(a, b, h)$ из пространства параметров $\mathbb{R}^3(a, b, h)$ соответствуют изоэнергетическая поверхность $Q_{a, b, h}$ со структурой слоения.
Каждая регулярная орбита $M^4_{a, b}$ задается парой $(a, b)$, где $a > f_l(b) >0$.  Бифуркационная диаграмма $\Sigma^{a, b}$ содержит конечное число особых точек, имеющих на плоскости $Ohk$ абсциссы $h_i$. Каждому такому $h_i$ соответствует или одна особая точка, или несколько.

Образ $\mathfrak{F}(Q_{a, b, h})$ лежит на прямой $H = h$ плоскости $Ohk$. Пересечения этой прямой с $\Sigma^{a, b}$ являются образами бифуркационных слоев. Для неособой $Q^3_{a, b, h}$ они не совпадают с особыми точками $\Sigma^{a, b}$, т.е. лежат на дугах $\Sigma^{a, b}$ и соответствуют атомам графа Фоменко (молекулы без меток). Приписав каждому атому графа название  $\alpha_1, \dots, \xi_5$ семейства пересекаемой дуги $\Sigma^{a, b}$, получим граф, который будем называть \textit{графом Фоменко с именованными атомами}.

\begin{assertion}\label{A:named_graphs}
В компактном случае Ковалевской встречается ровно $32$ различных графа Фоменко с именованными атомами (таблицы  \ref{Tab:Molecules_matrix_1}-\ref{Tab:Molecules_matrix_2}) слоений на неособых $Q^3_{a, b, h}$.
\end{assertion}

\begin{remark}
Из данного результата сразу следует теорема \ref{T:invariants}: инвариант Фоменко--Цишанга однозначно вычисляется по по графу Фоменко с именованными атомами и допустимым базисам для дуг $\alpha_1, \dots, \xi_5$.

В таблицах \ref{Tab:Molecules_matrix_1}-\ref{Tab:Molecules_matrix_2} также указаны вычисленные по допустимым базисам матрицы склейки на ребрах графов $1$-$32$. Их классы лиувиллевой эквивалентности $L_1, \dots L_{27}$ указаны в таблице~\ref{Tab:Liouville_classes_graphs}.
\end{remark}

Пусть все особые точки $\Sigma^{a, b}$ принадлежат семействам $y_1, \dots, z_{11}$ и имеют попарно различные абсциссы $h_i$. Тогда слоение на $M^4_{a, b}$ можно закодировать, записывая по очереди, по возрастанию $h$, названия семейств особых точек и номера графов Фоменко с именованными атомами, соответствующие интервалам $(h_i, h_{i+1})$ оси $Oh$. В качестве примера приведем код для любой точки $(a, b)$ из области VIII:
\[\textrm{VIII:} \quad  y_1,\, 1,\, z_4,\, 13, z_{11},\, 11\,, z_9.\]

\begin{definition}\label{Def:sep_set}
Назовем \textit{разделяющим множеством} $\Theta$ множество пар $(a, b)$ плоскости $Oab$, в окрестности которых есть пары $(a_i, b_i)$ с различными кодами или пара $(a', b')$, которой не соответствует кода.
\end{definition}

Кривые $f_l, f_t, f_k, f_r, f_m$, описанные И.К.Козловым в \cite{Kozlov14}, содержатся в $\Theta$, т.к.\,их точкам соответствуют некоторые перестройки бифуркационных диаграмм. В частности, для точек кривой $f_l$ орбита $M_{a, b}$ сингулярна, а для точек $(a, b)$ остальных кривых диаграмма $\Sigma^{a, b}$ имеет вырожденную особую точку, не принадлежащую семействам $y_1, \dots, z_{11}$. Эти кривые разделяют множество орбит c $b >0$ на 9 областей I-IX, а луч $b = 0, a >0$ на три промежутка X-XII.

Остальные дуги $\Theta$ состоят из точек $(a, b)$, для которых совпадают значения $h$ гамильтониана $H$ для двух и более особых точек $\Sigma^{a, b}$. Следующее утверждение описывает, как разделяющее множество делит плоскость $Oab$.

\begin{assertion}\label{A:subregions}
В компактном случае Ковалевской

1) разделяющее множество $\Theta$, описанное в утверждении \ref{A:Sep_set_1_arcs} как конечное объединение дуг, изображено на рисунках \ref{Fig:Plane_Sep_uv_big}-\ref{Fig:sep_set_small} в координатах $(u, v)$.

2) $\Theta$ разбивает области I-IX плоскости $Oab$ на $61$ подобласть, а промежутки X-XII оси $Oa$ разбиваются на 4 интервала и луч. Точкам из одного подмножества соответствуют одинаковые коды, приведенные в таблицах \ref{Tab:order_region_small}-\ref{Tab:order_region_I} для случая $b \ne 0$ и в таблице \ref{Tab:order_region_b0} для оси $b =0$.

3) номер графа Фоменко с именованными атомами, соответствующего произвольному неособому уровню $h$ для пары $(a, b) \in \Theta \backslash f_l$, входит в код каждой из подобластей, содержащих эту пару $(a, b)$ в своей границе.
\end{assertion}

\begin{remark}
Для краткости в кодах подобластей области II опустим одинаковые символы в конце кода: $\dots, 6, z_2 ,7, z_1$.
\end{remark}

На рисунках указаны номера всех подобластей и функций $f_i$, задающих ребра $\Theta$. Вершины $\Theta$, в которых пересекается более двух кривых, пронумерованы от 0 до 9.

\begin{remark}
Отметим, что на рисунке \ref{Fig:Plane_Sep_uv_big}-\ref{Fig:sep_set_small} разделяющее множество изображено в других координатах $(u, v)$. Из леммы \ref{L:change_ab_uv} следует, что образ $\Theta$ при сделанной замене $(a, b)\rightarrow (u, v)$ удовлетворяет определению \ref{Def:sep_set}.
\end{remark}

\begin{figure}[!htb]
\minipage{0.95\textwidth}
\includegraphics[width=\linewidth]{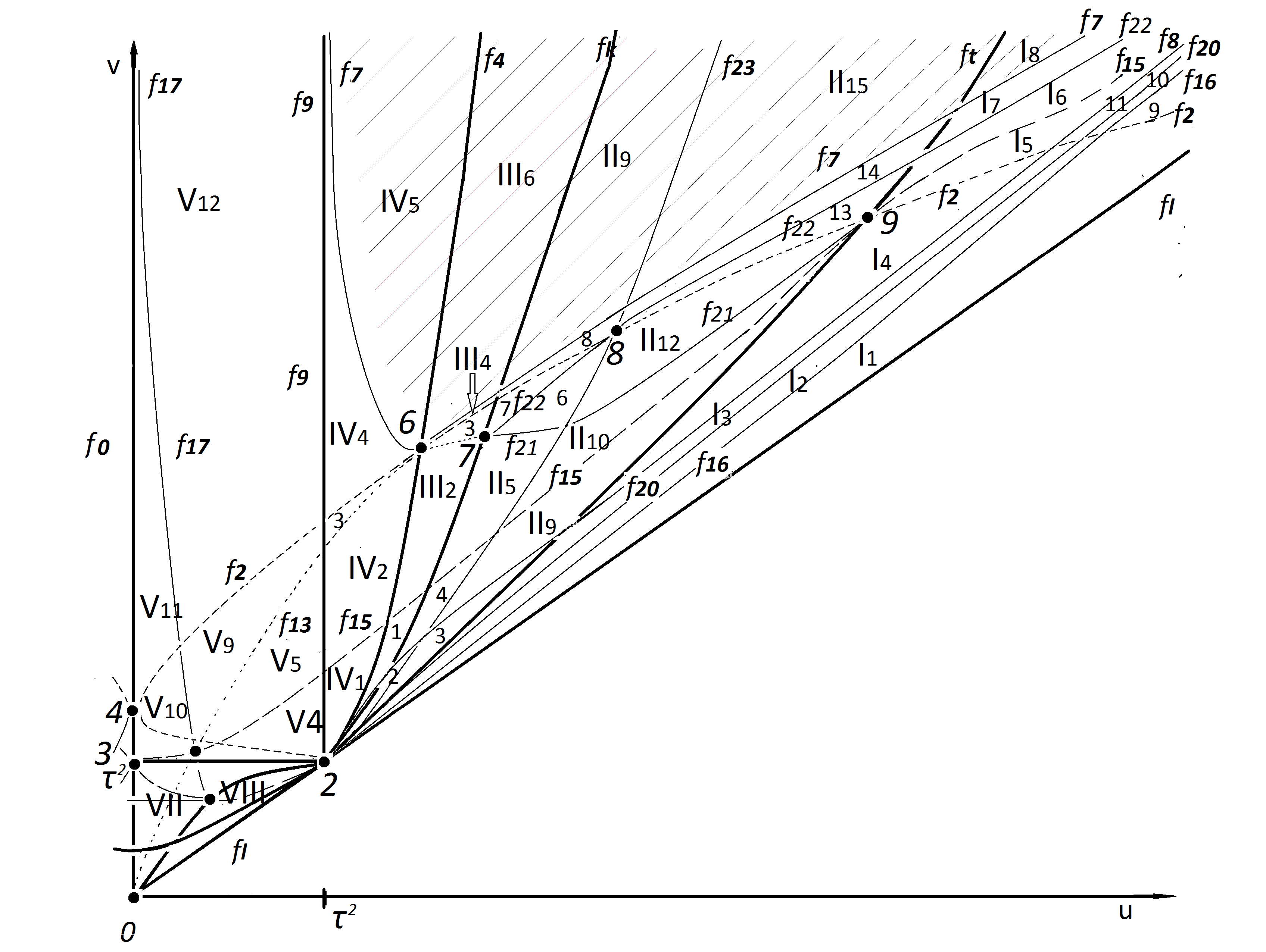}
   \caption{Разделяющее множество: области I-V} \label{Fig:Plane_Sep_uv_big}
\endminipage
\end{figure}

\begin{figure}[!htb]
\minipage{0.80\textwidth}
\includegraphics[width=\linewidth]{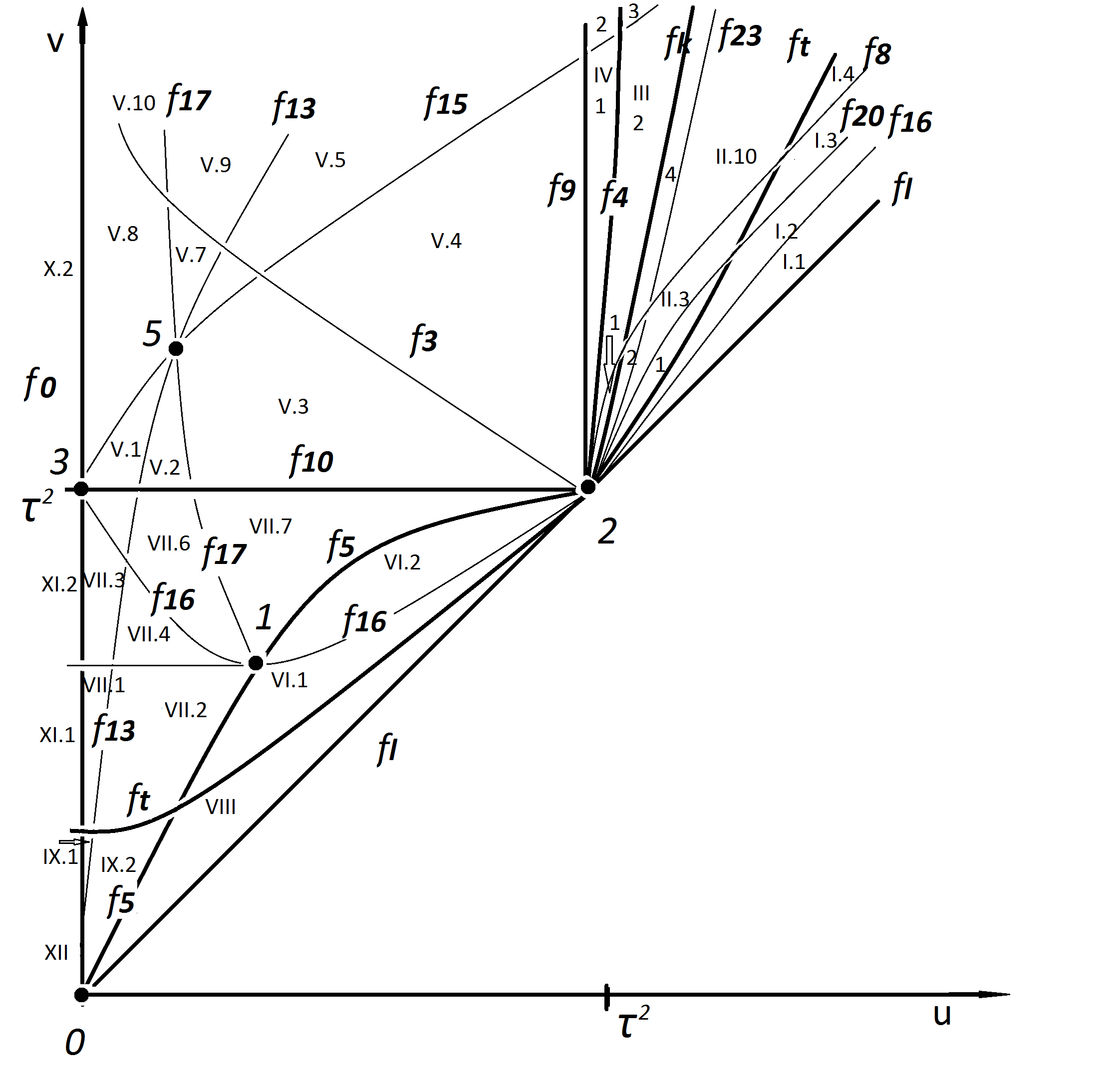}
   \caption{Увеличенный фрагмент рисунка \ref{Fig:Plane_Sep_uv_big}} \label{Fig:sep_set_small}
\endminipage
\end{figure}

\subsection{Стратификация пространства параметров системы}

\subsubsection{Трехмерные камеры неособых троек $(a, b, h)$}
Опишем, каким тройкам $(a, b, h)$ пространства параметров $\mathbb{R}^3(a, b, h)$ соответствуют одинаковые графы Фоменко с именованными атомами. Назовем \textit{камерой} связное множество троек $(a, b, h)$ с неособой 
$Q_{a, b, h}$, а \textit{особым множеством} $\mathbb{A}^2$ --- множество троек $(a, b, h)$ с особой $Q_{a, b, h}$.

Заметим, что если особая точка $\Sigma^{a, b}$ принадлежит семействам $y_1, \dots, \widehat{y_4}, \dots,$ $\widehat{z_7}, \dots, z_{11}$, то ее абсцисса $h$ является гладкой функцией от значений функций Казимира. Для нулей параметрической кривой \eqref{Eq:Parametric Curve} это верно, т.к. функции $h(a, b, z)$ и $k(a, b, z)$, определяющие ее, являются полиномами от $z$ и $z^{-1}$. Для оставшихся точек это следует из леммы \ref{L: h_coord_points}, в которой описаны явные уравнения поверхностей. Отметим, что при $b =0$ некоторые из поверхностей первой серии могут склеиваться друг с другом непрерывно, но не гладко. Множеству точек склейки соответствуют особые точки типа $y_4$ и $z_7$. Из данных соображений следует, что объединение границ всех камер совпадает с особым множеством $\mathbb{A}^2$. Отсюда особое множество замкнуто, а все камеры открыты. Сформулируем основное утверждение об устройстве пространства параметров $\mathbb{R}^3(a, b, h)$.

\begin{assertion}\label{A:cameras}
Каждая камера в пространстве параметров $\mathbb{R}^3(a, b, h)$ является открытым диском $D^3$, всем точкам $(a, b, h)$ которого соответствует одинаковый граф Фоменко с именованными атомами слоения на $Q^3_{a, b, h}$.

Каждая камера либо симметрична самой себе относительно плоскости $b =0$ и трансверсально пересекает ее, либо не имеет с ней общих точек, расположена строго по одну сторону от нее и симметрична другой камере.

Каждый граф Фоменко с именованными атомами реализуется в одной камере (графы 1-11) или паре камер (графы 12-32), симметричных относительно плоскости $b =0$.
\end{assertion}

\begin{remark}
Множество троек $(a, b, h)$ с пустыми $Q_{a, b, h}$ также гомеоморфно трехмерному диску. 
\end{remark}

Тем самым, каждой камере в полупространстве $b >0$ можно биективно сопоставить граф Фоменко с именованными атомами. Далее для каждого из ребер $\Theta$ можно перечислить графы Фоменко с именованными атомами, соответствующие неособым тройкам $(a, b, h)$ с парой $(a, b)$, лежащей на данном ребре.

Односвязность камеры следует из односвязности слоя проекции $(a, b, h) \rightarrow (a, b)$ камеры на плоскость $Oab$ и односвязности базы. Она проверяется явно по таблицам кодов \ref{Tab:order_region_small}-\ref{Tab:order_region_I} и \ref{Tab:order_region_b0}. Пусть некоторый номер графа Фоменко c именованными атомами присутствует в кодах двух соседних по дуге из $\Theta$ подобластей и для $(a, b)$ с этой дуги, кроме возможно, ее концов. Из пункта 3 утверждения \ref{A:subregions} на их объединение проецируется одна камера с данным графом.

\subsubsection{Граф соседства трехмерных камер}

 Назовем \textit{соседними} две камеры, которые граничат друг с другом по двумерному подмножеству. Напомним, что граница каждой камеры лежит в  особом множестве $\mathbb{A}^2$.

 Максимальное связное подмножество $\mathbb{A}^2$, каждой точке $(a, b, h)$ которого соответствует ровно одна особая точка $\Sigma^{a, b}$ фиксированного семейства $y_1, \dots, z_{11}$ кроме $y_4$ и $z_{11}$, назовем \textit{гранью}. Проекция такой грани на плоскость $Oab$ является инъекцией.

Теперь определим грани, лежащие в плоскостях $a = f_l(b)$ пространства $\mathbb{R}^3(a, b, h)$. Парам $a = f_l(b) >0$ соответствуют сингулярные двумерные орбиты $M^2_{a, b} \cong S^2$. Для каждой из них множество троек $(a, b, h)$ c непустыми $Q_{a, b, h}$ является точкой или вертикальным отрезком.

 \begin{lemma} \label{L:singular_orbits}
Пусть $\varkappa >0$ и $a = f_l(b)$, тогда на $M^2_{a, b}$ функции $H$ и $K$ зависимы, неособый слой гомеоморфен одной или двум окружностям $S^1$ или $2S^1$. Слоение на $M^2_{a, b}$ можно однозначно описать его особыми слоями:
\begin{itemize}
\item при $0< b \le \sqrt{\varkappa^3 c_1^4}$ оба особых слоя, минимальный и максимальный, являются точкой,

 \item при $\sqrt{\varkappa^3 c_1^4} < b$ седловой слой гомеоморфен восьмерке, минимальный является точкой, а  максимальный --- двумя точками.
\end{itemize}
\end{lemma}

Паре $(0, 0)$ соответствует одноточечная орбита $M_{0, 0}$, а при $a= 2\varkappa^2 c_1^2, b = \varkappa^{3/2} c_1^{2}$ происходит перестройка особых слоев. Здесь \textit{гранями} назовем максимальные связные подмножества троек $(a, b, h)$ с фиксированным числом окружностей в $Q_{a, b, h}$, реализующих неособые слои в $M^2$. Следующее утверждение доказывается аккуратным рассмотрением проекций граней на плоскость $Oab$.

\begin{assertion}
Каждая грань является открытым двумерным диском. Пересечение границ двух соседних камер (за исключением пары камер 1  и 2), содержит ровно одну грань. В таблице \ref{Tab:polyhedron_facets} перечислены все пары соседних камер. Для каждой из них указано семейство особых точек, соответствующее точкам грани.
\end{assertion}

\begin{remark}
Пара камер 1 и 2 граничит по двум граням, симметричным относительно плоскости $b =0$. Их точкам соответствует семейство $y_6$. В плоскости $b =0$ точкам кривой, являющейся границей этих граней, соответствует особая точка из семейства $y_4$. При этом типичная особенность типа седло-узел перестроилась в типичную особенность типа ``эллиптическая вилка'', см. \cite{BFR00}.

Семейство $z_7$ соответствует точкам кривой в плоскости $b =0$, лежащей в границе нескольких граней. При $b \rightarrow 0$ не происходит перестроек прообразов особых точек типа $z_4$ и $z_6$, имеющих тип центр-центр.
\end{remark}

\begin{remark}
В таблице \ref{Tab:polyhedron_facets} камеры в паре упорядочены по возрастанию $h$. Для граней $p_{65}, p_{66}$ (и граней, симметричных им при $b <0$)  указано семейство особых точек $\Sigma^{a, b}$ или количество окружностей, одна или две, составляющих $Q_{a, b, h}$. Знак $\oslash$ обозначает множество всех троек $(a, b, h)$ с пустыми $Q_{a, b, h}$.
\end{remark}

\subsection{Связь с классическим случаем Ковалевской}\label{s:Results_classcase}%

Многие динамические системы, возникающие в механике, могут рассматриваться как системы на алгебре Ли $e(3)$. В классическом случае Ковалевской можно без потери общности положить геометрический интеграл $f_1$ равным $1$. Тогда плоскость $Obh$ разбивается несколькими кривыми на области с лиувиллево эквивалентными слоениями $Q^3_{1, b, h}$. На рисунке \ref{Fig:Projection_h_l} изображено данное разбиение, т.е. сечение особого множества $\mathbb{A}^2$ классического случая Ковалевской уровнем $a =1$. Сплошные кривые разделяют области с негомеоморфными $Q^3_{1, b, h}$, а пунктирные линии --- области с гомеоморфными $Q^3_{a, b, h}$, но неэквивалентными слоениями Лиувилля.  Им соответствуют типы $e_1, e_2, c_1, c_2, h_1, h_2$ особых точек --- вырождения классов Аппельрота \cite{BFR00}.

В случае $\varkappa =0$ прообразы особых точек бифуркационных диаграмм $\Sigma^{1, b}$  содержатся в поверхностях $Q^3_{1, b, h}$, соответствующих изображенным кривым. Данные линии имеют $9$ общих точек. Семь из них соответствуют вырождению критических точек. В \cite{BFR00} они назывались суперсингулярными и были объединены в классы $S_0, S_1, S_2, S_3$. Отметим, что при пересечении кривых $M$ и $U$ вырождения не происходит --- образы критических точек на $Ohk$ имеют разное значение интеграла $K$.

\begin{figure}[!htb]
\begin{center}
\minipage{0.70\textwidth}
\includegraphics[width=\linewidth]{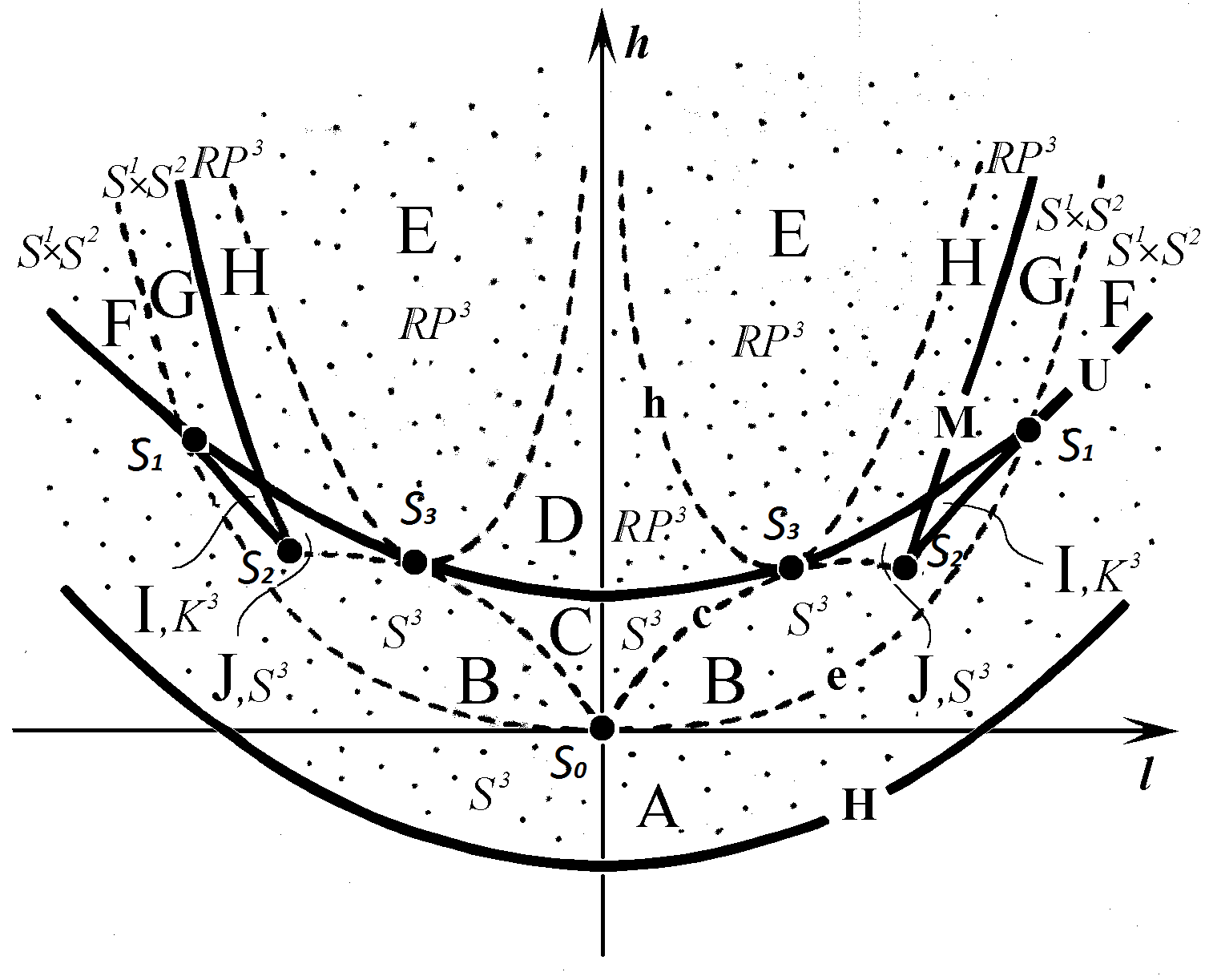}
   \caption{Проекция $\Sigma_{b, h}$ в случае $\varkappa =0$} \label{Fig:Projection_h_l}
\endminipage
\end{center}
\end{figure}

 В работе \cite{Kozlov14} установлено, что при предельном переходе $\varkappa \rightarrow +0$ сохраняются типы перестроек для ``старых'' дуг $\alpha_1, \dots, \delta_2$ и особых точек $y_1, \dots, y_{13}$.  Тем самым, области I-IV компактного случая Ковалевской соответствуют интервалам, на которые ось $Ob$ разбивается проекциями суперсингулярных точек на рисунке \ref{Fig:Projection_h_l}.  Отметим, что координата $b$ точки пересечения кривых $M$ и $U$ разделяет один из интервалов на два, с разным порядком особых точек $y_3$ и $y_{10}$ по оси $Oh$.

По следствию \ref{Cor:Liouville_equiv} каждое слоение на неособом $Q^3_{a, b, h}$ классического случая Ковалевской лиувиллево эквивалентно одному из слоений компактного случая.

\begin{assertion}\label{A:model_classcase}
 Заштрихованные на рисунке \ref{Fig:Plane_Sep_uv_big} подобласти I.8, II.15, II.9, III.6 и IV.5 компактного случая Ковалевской, 
 лежащие выше кривой $v = f_{7}(u) =  2 u^{3/2} / (\sqrt{u} - \sqrt{2}\varkappa c_1)$, соответствуют интервалам классического случая Ковалевской, а именно
 \begin{enumerate}
 \item в данных подобластях компактного случая все ``старые'' (имеющие аналоги в классическом случае Ковалевской) особые точки $y_i \in \Sigma^{a, b}$ лежат левее всех ``новых'' (не имеющих таких аналогов) особых точек $z_i$.

 \item начало кода (т.е. последовательность до первой точки $z_i$ особых точек $y_i$ и молекул Фоменко с именованными атомами при возрастании $h$) этих подобластей компактного случая совпадает с кодами интервалов классического случая Ковалевской.
 \end{enumerate}
 \end{assertion}

 Отметим, что соседним интервалам оси $Ob$ на рисунке \ref{Fig:Projection_h_l}, ограниченным значениями $b_i$ общих точек изображенных на нем кривых соответствуют соседние подобласти компактного случая. В силу независимости устройства сечения от выбора $a >0$ для случая $\varkappa =0$, можно говорить о ``вложении'' классического случая Ковалевской при ненулевой постоянной площадей в компактный случай Ковалевской.

\subsection{Устройство разделяющего множества}

\subsubsection{Поверхности в $\mathbb{R}^3(a, b, h)$ и особые точки $\Sigma^{a, b}$}

Абсциссы $h$ особых точек $\Sigma^{a, b}$ кроме нулей параметрической кривой \eqref{Eq:Parametric Curve} удовлетворяют хотя бы одному из уравнений $h = h_i(a, b)$ или $h = h(a, b, z_i(a, b))$. Все особые точки  из одного семейства $y_1, \dots, \widehat{y_4}, \dots, \widehat{z_7}, \dots, z_{11}$ удовлетворяют ровно одному из этих уравнений.

Графиками этих функций от $(a, b)$ являются двумерные поверхности в $\mathbb{R}^3(a, b, h)$, обозначим их $int, l$ и $\pm r, \pm l, lt, rt, cusp$. Будем называть их и соответствующие им особые точки $\Sigma^{a, b}$ \textit{поверхностями} и \textit{особыми точками первой серии}. В таблице~\ref{Tab:surface_family} указано соответствие поверхностей и семейств особых точек.

 В зависимости от $(a, b)$ точке $(a, b, h)$ такой поверхности может соответствовать одна или несколько особых точек, или $\Sigma^{a, b}$ может не содержать особых точек с такой абсциссой $h$.

Тройкам $(a, b, h)$ из поверхностей $\pm r, \pm l$ соответствуют точки пересечения параметрической кривой \eqref{Eq:Parametric Curve} с левой и правой параболами, тройкам из $ lt, rt$ --- касания этих кривых. Тройки из поверхности $cusp, int$ и $l$ соответствуют точке возврата кривой \eqref{Eq:Parametric Curve}, точке пересечения парабол и вершине левой параболы.

Нулям параметрической кривой \eqref{Eq:Parametric Curve}, в том числе особым точки из семейств $y_{10}, y_{11}$ и $z_1$, соответствует поверхность $root$, заданная неявно: $k(a, b, z) = 0$. Она не является однозначной над областями I и II. Эти семейства и поверхность будем относить ко \textit{второй серии}.

Для удобства дальнейших вычислений перепараметризуем множество орбит $D = \{(a, b)| a \ge f_l \ge 0\}$ значениями $(u, v)$ новых функций Казимира, которые выражаются через $f_1, f_2$ как решения уравнения $f^2 - 2 f_1 f + 4 f_2^2 = 0$.

\begin{equation}\label{Eq:uv_ab}
\begin{cases}
u = a - \sqrt{a^2- 4\varkappa b^2}&\\
v = a + \sqrt{a^2- 4\varkappa b^2}&\\
\end{cases},
\quad
\begin{cases}
a = (u+v)\slash 2 &\\
b^2 = uv \slash (4\varkappa) &\\
\end{cases}
\end{equation}

Следующая лемма о порождаемом при этом отображении $\rho: \, (a, b) \rightarrow (u, v)$ в плоскости $Oab$ доказывается явным вычислением. При этом образом лучей ${b= 0, a \ge 0}$ и ${a = f_l(b) \ge 0, b \ge 0}$ являются лучи ${u= 0, v \ge 0}$ и ${v= u, u \ge 0}$. Далее сохраним для $\rho(D)$ обозначение $D$.

\begin{lemma}\label{L:change_ab_uv}
Пусть $\varkappa >0$, тогда замена \eqref{Eq:uv_ab} регулярна в $\mathrm{Int}\,D$ и является гомеоморфизмом множеств $D$ и $\{(u, v)|\, v \ge u \geq 0\}$.
\end{lemma}

Функции $h_i(a, b)$ и $z_i(a, b)$ для поверхностей первой серии были найдены в работе \cite{Kozlov14}. В лемме \ref{L: h_coord_points} они будут записаны как функции от $(u, v)$.

\subsubsection{Равенство абсцисс особых точек первой серии} Укажем орбиты, для которых равны значения функций $h_i$ первой серии, т.е. совпадают абсциссы $h_i$ особых точек из семейств первой серии или происходит перестройка особых точек.

В утверждении \ref{A:Curves_sep_set_2} попарно приравняем их функции $h_i(a, b)$ и $h(a, b, z_i(a, b))$ и решим возникающие уравнения. Иными словами, поверхности первой серии будут в $\mathbb{R}^3(a, b, h)$ пересекаться или касаться над кривыми на плоскости орбит, перечисленными в таблицах \ref{Tab:sep_curves_2.1} и \ref{Tab:empty_curves}.

\begin{assertion} \label{A:Curves_sep_set_2}
 Пусть $\varkappa > 0$, тогда поверхности первой серии пересекаются в точности для пар $(u, v)$, лежащих на разделяющих кривых $f_0, ..., f_{19}$.

   В таблицах \ref{Tab:sep_curves_2.1} и \ref{Tab:empty_curves} для них указаны явные формулы вида $y = y(x)$ и значения $x_1, x_2$ аргумента $x$, при которых $f_i$ лежит в $D$.
\end{assertion}

\begin{remark}
В таблицах \ref{Tab:sep_curves_2.1} и \ref{Tab:empty_curves}
\begin{enumerate}
\item в указанных интервалах $x \in (x_1, x_2)$ функция $f_i$ определена, и точка с координатами $x$ и $f_i(x)$ лежит в $D$, т.е. ей соответствует непустая орбита.
\item семейства в паре упорядочены по возрастанию абсцисс $h$ их особых точек при $f_{i} (x) +\varepsilon > y > f_{i} (x)$, т.е. там $h_1 \le h_2$. Паре сопоставлен нижний индекс $0$, если на кривой достигается минимум $h_2 - h_1$, и $1$, если знак этой разности изменяется при переходе через кривую.
\item кривые $f_l$ и $f_t$ являются графиками функций $f_1$ и $f_{14}$, а кривые $f_r$, $f_m$ --- объединением графиков функций $f_4$ и $f_5$, $f_9$ и $f_{10}$ соответственно.
\end{enumerate}
\end{remark}

Утверждение \ref{A:Curves_sep_set_2} доказано в пункте \ref{s:Proof_proof_A:Curves_sep_set_2}.

\begin{corollary}
Пусть $\varkappa >0$, тогда луч  $a > 0, b = 0$ содержит ровно $4$ точки, для которых имеются пересечение поверхностей первой серии:
\[\cfrac{\varkappa^2 c_1^2}{4}:\, l, +r, z_1, \qquad \frac{\varkappa^2 c_1^2}{2}:\, l, rt, \qquad \varkappa^2 c_1^2:\, +l, int, rt,   \qquad  4 \varkappa^2 c_1^2:\, +l, -r\]
\end{corollary}

\subsubsection{Равенство абсцисс особых точек из разных серий}

 Теперь опишем разделяющие кривые, для точек $(u, v)$ которых нуль параметрической кривой имеет абсциссу $h$, удовлетворяющую одному из уравнений первой серии $h = h_i(u, v)$ или $h = h(u, v, z_i(u, v))$.
 Отметим, что этим нулем не является точка $z_1$ --- самая правая особая точка $\Sigma^{a, b}$, если содержится в ней.

 Первой такой кривой является $f_k$, на которой точка возврата с параметром $z_{cusp}$ является нулем кривой \eqref{Eq:Parametric Curve}, т.е. $k(f_k(b), b, z_{cusp}(f_k(b),b)) = 0$. Оставшиеся четыре кривые $f_{20}, f_{21}, f_{22}, f_{23}$ описаны в утверждении \ref{A:curves_sep_set_1}. Рассмотрим замены координат $q = q(u, v), s = s(v)$ и $\widetilde{q} = q(v, u), \widetilde{s} = s(u)$.

\begin{equation}\label{Eq:sq_coordinates}
\begin{cases}
q(u, v) = \cfrac{\sqrt{u\mathstrut}}{v}&\\
s(v) = 1/v&\\
\end{cases},
\qquad 
\begin{cases}
u = \cfrac{q^2}{s^2} &\\
v = 1/s&\\
\end{cases}
\end{equation}

\begin{lemma}\label{L:change_uv_qs}
Пусть $\varkappa >0$, тогда замена \eqref{Eq:sq_coordinates} задает гомеоморфизм множеств $E = \{(u, v)| v \ge u \ge \tau^2\}$ и  $\{(q, s)\,| \,q / \tau \ge s \ge q^2, 0 < q \le 1/\tau\}$ и регулярна в $\mathrm{Int}\,E$.
\end{lemma}

В координатах $(q, s)$ разделяющие кривые $f_l, f_t, f_r$ и $f_m$ являются графиками функций одной переменной, как и в исходных координатах $(a, b)$. Эти функции указаны в таблице \ref{Tab:sep_curves:q_s-type}. Для кривой $f_k$ верен аналогичный факт, который будет доказан в разделе \ref{s:Proof_proof_L:bif_curves_sq}.

\begin{lemma}\label{L:bif_curves_sq}
Пусть $\varkappa >0$, тогда в координатах \eqref{Eq:sq_coordinates} дуга кривой $f_k$, лежащая в области $E$ является графиком явной функции $s = f_{24}(q)$
\[f_{24}(q) = - \cfrac{1}{\tau^2} (2-3\tau^{2/3}q^{2/3}) 
+ \cfrac{2}{\tau^2} \left(1 - \tau^{2/3}q^{2/3}\right)^{\frac{3}{2}}. \]
\end{lemma}

Далее будем обозначать листы поверхности $root$, соответствующие нулям $y_{11}$ и $y_{10}$ как $rootl$ и $rootr$ соответственно. Эти поверхности в пространстве $\mathbb{R}^3(a, b, h)$ склеиваются в точках $(a, b, h)$ при $h = h(a, b, z_{cusp}(a, b)), a = f_k(b)$.
Заметим, что проводить все попарные проверки поверхностей первой серии и $rootl$, $rootr$ на совпадение значений $h$ не требуется.

\begin{lemma}\label{L:possible_pairs}
При $\varkappa >0$ достаточно проверить следующие пары поверхностей из разных серий:
\begin{itemize}
\item область I: \,\,$(rootr, -r)$, $(rootr, +r)$,
\item область II: $(rootr, -r)$, $(rootr, +r)$, $(rootr, +l)$ и $(rootl, -r)$.
\end{itemize}
\end{lemma}

Действительно, как  было установлено в \cite{Kozlov14}, $h_z' < 0$ при $0 < z < \sqrt[3]{b^2 c_1^2} = z_{cusp}$ и $h_z' >0$ при остальных $z \ne 0$. Отсюда $h(z_1)> h(rt) > h(lt) > h(rootr)$ и $h(+l) > h(rootl)$. Т.к. $h(-l) < h_l < h(rootl)$ в области II и $h_l < h(rootr)$ в областях I, II, то положение оставшихся точек относительно $rootr$ и $rootl$ известно. Отсюда следует утверждение леммы \ref{L:possible_pairs}.

Найдем уравнения разделяющих кривых $f_{20}, ..., f_{23}$, на которых достигаются равенства значений гамильтониана в точках $\pm r$ или $\pm l$ и $rootl, rootr$.

\begin{assertion}\label{A:curves_sep_set_1} Пусть $\varkappa >0$, тогда

1) равенства $h(rootr) = h(+r)$, $h(rootl) = h(-r)$ и $h(rootr) = h(-r)$ в координатах $(q, s) = (v^{-1} \sqrt{u}, v^{-1})$ достигается на графиках следующих функций $s = f_{i}(q)$ в плоскости $Oqs$, где $i \in \{20, 21, 22\}$ соответственно:
 \begin{equation}\label{Eq:sq_rootr_pr}
 f_{i}: \, \quad s(q) = \cfrac{-(1 + 2 \sigma_i \tau q) + \sqrt{(1 + 2 \sigma_i \tau q) ^2 - 2\tau^2 \left( 2 q^2  - 8\cfrac{q^2}{w_{i}(q)} + 128 \tau^2 \cfrac{q^4}{w_{i}^4(q)} \right)\mathstrut}}{\tau^2},
 \end{equation}

2) равенство $h(rootr) = h(+l)$ достигается на кривой, которая в координатах $(\widetilde q, \widetilde s)  = (u^{-1} \sqrt{v}, u^{-1})$ $\widetilde s = f_{23}(\widetilde q)$ является графиком функции $\widetilde{s} = f_{20}(\widetilde{q})$.

Здесь $\sigma_{20} = -1, \sigma_{21} = \sigma_{22} = +1$. В выражении $w_i$ знаки ``+'' и ``$-$'' перед корнем определяются выбором правого или левого корня $rootr$ и $rootl$ соответственно, а знаки ``+'' и ``$-$'' под корнем равны $-\sigma_i$ и зависят от выбора $+r$ или $-r$:
\[w_{20} = \left( 1 +  \sqrt{1 - 8 \sigma_{20} \tau \sqrt{u}/v } \right), w_{21, 22} = \left( 1 \pm  \sqrt{1 - 8 \sigma_{21} \tau \sqrt{u}/v } \right).\]
\end{assertion}

\begin{remark} При $1-8 \tau q <0,$  
выражения $w_{21}, w_{22}$ не определены. Значение $q = 1/(8\tau)$ соответствует общей точке $(u_0, f_{13}(u_0))$ кривых $f_{21}, f_{22}, f_{13}$ и $f_k$.
\end{remark}

Благодаря явному заданию кривых можно утверждать, что на рисунки \ref{Fig:Plane_Sep_uv_big} и \ref{Fig:sep_set_small} попали все ветви этих кривых, содержащиеся в $\Theta$.

\subsubsection{Дуги разделяющих кривых, входящие в $\Theta$} На следующем шаге для каждой из функций $f_i$ укажем подмножество промежутка $(x_1, x_2)$, на котором точка графика $f_i$ содержится в $\Theta$. Отметим, что общая тройка $(a, b, h)$ двух поверхностей содержится в $\mathbb{A}^2$, т.е. пара $(a, b) \in \Theta$, в точности если $\Sigma^{a, b}$ содержит две особые точки с равными значениями $h$ или осуществляется одна из перестроек бифуркационных диаграмм, происходящих при пересечении кривых $f_l, f_t, f_k, f_r, f_m$ или $b =0$.

Проверим, на каких дугах кривых $f_i$ лежат тройки, которым соответствуют две особые точки $\Sigma^{a, b}$ с равными абсциссами. Функция $f_i $ была записана в таблицу \ref{Tab:empty_curves}, если ее график содержит конечное или нулевое число точек из $\Theta$, или в таблицу \ref{Tab:sep_curves_2.1}, если целая дуга графика $f_i$ содержится в $\Theta$. Следующее утверждение доказано в пункте \ref{s:Proof_proof_A:Sep_set_1_arcs}.

\begin{assertion}\label{A:Sep_set_1_arcs}
В компактном случае Ковалевской разделяющее множество $\Theta$ состоит из следующих дуг кривых $f_i$:
\begin{enumerate}
\item $f_0$, $f_1$, $f_2$, $f_3$, $f_4$, $f_5$, $f_7$, $f_8$, $f_9$, $f_{10}$, $f_{14}$, $f_{15}$, $f_{16}$ --- с указанными в таблице \ref{Tab:sep_curves_2.1} областями определения в координатах $(u, v)$, заданных формулой~\eqref{Eq:uv_ab},
\item $v = f_{18}(u)$ --- при $u \in (0, \tau^2 /4)$,
\item $v = f_{13}(u)$ --- при $u \in (0, u_0)$. Здесь $u_0 = \cfrac{(5+3\sqrt{3})^2}{16}\tau^2$,
\item  $v = f_{17}(u)$ --- при $u \in (0, \tau^2/4)$,
\item $a = f_k(b)$  --- при $b \ge 0$.
\item Кривые $f_{6}, f_{11}, f_{12}, a = f_{19}(b)$ не добавляют новых дуг и точек в $\Theta$.

\item Описанные в утверждении \ref{A:curves_sep_set_1} кривые $f_{20}, f_{22}, f_{23}$ входят в $\Theta$ целиком, кривая $f_{21}$ --- до пересечения с $f_t$.
\end{enumerate}
\end{assertion}

\begin{remark}
Точка $(u_0, f_{13}(u_0))$ лежит на кривых $f_{13}$, где $h_{cusp} = h_{-r}$, и на кривой $f_k$, общей для поверхностей  $rootr$, $rootl$ и  $cusp$.
\end{remark}

Таким образом, на рисунках \ref{Fig:Plane_Sep_uv_big} и  \ref{Fig:sep_set_small} указаны все дуги разделяющих кривых, входящих в $\Theta$. В частности, отсутствуют иные ветви кривых $f_{20}, \dots, f_{23}$.

\subsubsection{Точки пересечения разделяющих кривых} Остается удостовериться, что на эти рисунки попали все точки пересечения этих дуг. В утверждении \ref{A:sepset_22} докажем, что все точки пересечения дуг $\Theta$, лежащие в достаточно малой окрестности вершины $2$, уже указаны на рисунке \ref{Fig:sep_set_small} (т.е. при рассмотрении такой окрестности ``под микроскопом'' не обнаружатся новые вершины графа $\Theta$).

\begin{assertion}\label{A:sepset_22}
Все пересечения кривых из $\Theta$ вблизи точки  $(\tau^2, \tau^2)$ указаны на рисунке \ref{Fig:sep_set_small}. А именно, кривые $f_r$, $f_{16}$ не имеют там точек пересечения с другими кривыми, кривая $f_8$ пересекает кривые $f_k, f_{23}, f_{t}$, кривая $f_{20}$ пересекает кривую $f_{t}$.
\end{assertion}

Теперь опишем поведение кривых $\Theta$ на большом удалении от нуля в плоскости $Ouv$.  В утверждении \ref{A:no_faraway_intersections} докажем, что дуги $\Theta$ не пересекаются при достаточно больших $(u, v)$, т.е. все вершины $\Theta$ попали на рисунок \ref{Fig:Plane_Sep_uv_big}.

\begin{assertion}\label{A:no_faraway_intersections}
Кривые $\Theta$ пересекают окружность достаточно большого радиуса с центром в начале координат в следующем порядке. Окружность обходится против часовой стрелки.
\[f_1 = f_l , f_2, f_{16}, f_{20}, f_{8}, f_{15}, f_{22}, f_{7}, f_{14} = f_t, f_{23}, f_{24} = f_{k}, f_{4} = f_r, f_{7}, f_{9} = f_m, f_{17}, f_{0}.\]
\end{assertion}

\begin{corollary}
Разделяющее множество имеет структуру графа с $75$ вершинами и $199$ ребрами, симметричного относительно прямой $b =0$.
Все вершины и ребра $\Theta$ изображены на рисунках \ref{Fig:Plane_Sep_uv_big}-\ref{Fig:sep_set_small}.
\end{corollary}

\section{Доказательства утверждений о разделяющем множестве}\label{s:Proof_proof}

\subsection{Доказательство лемм}\label{s:Proof_proof_Lemms}

\subsubsection{Явные формулы для поверхностей первой серии}\label{s:Proof_proof h_coord_points}

\begin{lemma} \label{L: h_coord_points}
Пусть $\varkappa >0$, тогда в области $\hat D = \{(a, b)\,|\, a > 2\sqrt{\varkappa} b > 0\}$ абсциссы особых точек первой серии в координатах $(u, v)$, заданных формулой \eqref{Eq:uv_ab}, имеют следующий вид, где $\tau = \sqrt{2}\varkappa c_1$:
\begin{equation} \label{Eq:h_coord_2_uv_pmrl}
h_{\pm l} = \frac {u \pm 2\tau \sqrt{v}} {2\varkappa}, \qquad
h_{\pm r} = \frac {v \pm 2\tau \sqrt{u}} {2\varkappa}, \qquad
h_{lt} = \frac {v} {u}\varkappa c_1^2 + \frac {u}{\varkappa}, \qquad
h_{rt} = \frac {u} {v}\varkappa c_1^2 + \frac {v}{\varkappa},
\end{equation}
\begin{equation} \label{Eq:h_coord_2_uv_rest}
h_{cusp} = 3 (u v)^{1/3} \left(\frac {c_1^2}{4\varkappa}\right)^{1/3}, \qquad
h_l = \varkappa c_1^2 + \frac{u}{\varkappa}, \qquad
h_{int} = \varkappa c_1^2 + \frac{u + v}{2\varkappa}.
\end{equation}
\end{lemma}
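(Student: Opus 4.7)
My plan is to dispatch the three ``easy'' formulas by direct inspection and then reduce the remaining six to a single polynomial factorisation that is transparent in the coordinates \eqref{Eq:uv_ab}. For the cusp, I would differentiate $h(z)$: the condition $h'(z)=-2b^{2}c_{1}^{2}/z^{3}+2=0$ gives $z_{cusp}^{3}=b^{2}c_{1}^{2}$ and hence $h_{cusp}=3(b^{2}c_{1}^{2})^{1/3}$, which rewrites as $3(uv)^{1/3}(c_{1}^{2}/(4\varkappa))^{1/3}$ via $b^{2}=uv/(4\varkappa)$. For $h_{l}$, the vertex of the left parabola \eqref{Eq:Left Parabola} is $\varkappa c_{1}^{2}+(a-\sqrt{a^{2}-4\varkappa b^{2}})/\varkappa=\varkappa c_{1}^{2}+u/\varkappa$ by the very definition of $u$. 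For $h_{int}$, equating \eqref{Eq:Left Parabola} with \eqref{Eq:Right Parabola} and taking the nontrivial sign gives $h_{int}=\varkappa c_{1}^{2}+(u+v)/(2\varkappa)$ in one line.

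The six values $h_{lt}, h_{rt}, h_{\pm l}, h_{\pm r}$ come from intersecting the parametric curve \eqref{Eq:Parametric Curve} with the two parabolas. My first step will be to rewrite $k(z)$ on the curve in the convenient form $k(z)=(h(z)-\varkappa c_{1}^{2})^{2}-4F(z)$ with $F(z)=z^{2}+2b^{2}c_{1}^{2}/z-ac_{1}^{2}$. Setting this equal to $(h(z)-\varkappa c_{1}^{2}-u/\varkappa)^{2}$ for the left parabola, clearing denominators, and rescaling $w=2\varkappa z$, I would land (using $a=(u+v)/2$, $b^{2}=uv/(4\varkappa)$, and $\tau^{2}=2\varkappa^{2}c_{1}^{2}$) on the quartic
\[
w^{4}-2uw^{3}+(u^{2}-\tau^{2}v)w^{2}+2\tau^{2}uv\,w-\tau^{2}u^{2}v=0,
\]
which factorises as $(w-u)^{2}(w^{2}-\tau^{2}v)=0$. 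The double root $w=u$ yields $z_{lt}=u/(2\varkappa)$, and substituting into $h(z)=2z+uvc_{1}^{2}/(4\varkappa z^{2})$ produces $h_{lt}=u/\varkappa+\varkappa c_{1}^{2}v/u$; the simple roots $w=\pm\tau\sqrt{v}$ produce $h_{\pm l}=(u\pm 2\tau\sqrt{v})/(2\varkappa)$. For the right parabola I would invoke the $u\leftrightarrow v$ symmetry: $h(z)$ depends on $(u,v)$ only through the symmetric combinations $uv$ and $u+v$, and the exchange swaps the two parabolas, so the analogous quartic factorises as $(w-v)^{2}(w^{2}-\tau^{2}u)=0$ and yields $h_{rt}$ and $h_{\pm r}$ immediately.

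The main obstacle will be recognising the factorisation above. In the original $(a,b)$ coordinates the same polynomial has $\sqrt{a^{2}-4\varkappa b^{2}}$ hidden inside its coefficients, and the cleanly squared factor signalling the tangency $lt$ is invisible. The rationale for introducing $(u,v)$ as the roots of the resolvent $f^{2}-2f_{1}f+4f_{2}^{2}=0$ of the Casimirs is exactly that the two parabolas acquire the uniform shapes $k=(h-\varkappa c_{1}^{2}-u/\varkappa)^{2}$ and $k=(h-\varkappa c_{1}^{2}-v/\varkappa)^{2}$, whereupon the intersection quartic acquires the displayed double-root structure and the remainder of the proof reduces to a mechanical polynomial identity, which I would relegate to a short explicit calculation.
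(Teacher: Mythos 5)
Your proposal is correct: I checked the identity $k(z)=(h(z)-\varkappa c_1^2)^2-4\bigl(z^2+2b^2c_1^2/z-ac_1^2\bigr)$ against \eqref{Eq:Parametric Curve}, and after the substitutions $a=(u+v)/2$, $b^2=uv/(4\varkappa)$, $w=2\varkappa z$, $\tau^2=2\varkappa^2c_1^2$ the intersection condition with the left parabola \eqref{Eq:Left Parabola} does reduce to $w^4-2uw^3+(u^2-\tau^2v)w^2+2\tau^2uvw-\tau^2u^2v=(w-u)^2(w^2-\tau^2v)=0$, whose roots give exactly $z_{lt}=u/(2\varkappa)$ and $z_{\pm l}=\pm c_1\sqrt{v/2}$, and the $u\leftrightarrow v$ swap handles the right parabola; the treatments of $h_{cusp}$ (from $h'(z)=0$, $z_{cusp}^3=b^2c_1^2$), of $h_l$ (vertex of the left parabola, which in the coordinates \eqref{Eq:uv_ab} is $\varkappa c_1^2+u/\varkappa$) and of $h_{int}$ (equating the two parabolas) are also correct. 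The route differs from the paper's in one respect: the paper's proof does not derive the parameter values at all -- it simply records $z_{\pm l}=\pm c_1\sqrt{v/2}$, $z_{lt}=u/(2\varkappa)$, $z_{\pm r}=\pm c_1\sqrt{u/2}$, $z_{rt}=v/(2\varkappa)$, $z_{cusp}=(uvc_1^2/(4\varkappa))^{1/3}$ (these $z_i(a,b)$ and $h_i(a,b)$ being taken from \cite{Kozlov14} and merely rewritten in $(u,v)$) and then substitutes them into $h(z)=b^2c_1^2/z^2+2z$, which is exactly your final step. So your argument is a self-contained re-derivation: it costs one explicit quartic factorisation but buys independence from the earlier $(a,b)$-computations and makes visible why the coordinates \eqref{Eq:uv_ab} are the natural ones (the two parabolas become $k=(h-\varkappa c_1^2-u/\varkappa)^2$ and $k=(h-\varkappa c_1^2-v/\varkappa)^2$, and the tangency appears as the squared factor), whereas the paper's proof is shorter but leans on the cited formulas. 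One cosmetic remark: at the cusp you should note (or it follows immediately) that $k'(z)$ also vanishes at $z^3=b^2c_1^2$, so the point found from $h'(z)=0$ is indeed the return point of the curve \eqref{Eq:Parametric Curve}.
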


\begin{proof}
Несложно посчитать, что
\[ z_{\pm l} = \pm \sqrt{\frac v 2}c_1, \quad z_{lt} = \frac {u} {2\varkappa}, \qquad z_{\pm r} = \pm \sqrt{\frac u 2}c_1, \quad z_{rt} = \frac {v} {2\varkappa}, \quad z_{cusp} = \left(\frac {uv} {4} \cfrac{c_1^2}{\varkappa}\right)^{1/3}.\]

Для поверхностей класса $h = h(a, b, z_i(a, b))$ данные выражения были получены подстановкой $z_i(u, v)$ в формулу
 \[h(z) = \cfrac{b(u, v)^2 c_1^2}{z^2} + 2z.\]
Лемма \ref{L: h_coord_points} доказана.
\end{proof}

\subsubsection{Доказательство леммы \ref{L:singular_orbits} о сингулярных орбитах} \label{s:Proof_proof_L_singular_orbits}
\begin{enumerate}
 \item При $\varkappa >0$ из $f_1 = f_l(f_2)$ следует, что $x_i = \sqrt{\varkappa} J_i$, т.е. объединение сингулярных орбит изоморфно  $\mathbb{R}^3(\textbf{J})$ со структурой алгебры Ли $so(3)$. Функцией Казимира является $f = J_1^2 + J_2^2 + J_3^2 = b \varkappa^{-1/2}$. Интегралы $H, K$ и $f$ зависимы:
\[K = ((J_1 - \sqrt{\varkappa} c_1)^2 - J_2^2)^2 + 4 J_2^2 (J_1 - \sqrt{\varkappa} c_1)^2 = ((J_1 - \sqrt{\varkappa} c_1)^2 + J_2^2)^2.\]
\[\mathrm{При этом}\, H = 2(J_1^2 + J_2^2 + J_3^2) - J_1^2 - J_2^2 + 2c_1 x_1 = 2b \varkappa^{-1/2} - ((J_1 - \sqrt{\varkappa}c_1)^2 +\varkappa c_1^2 + J_2^2).\]
Тем самым,  $H = -\sqrt{K} + \varkappa c_1^2 + 2b \varkappa^{-1/2}$. Образ отображения момента $(H, K)$ одномерен и лежит на параболе $k = \left(h - a \varkappa^{-1} - \varkappa c_1^2\right)^2$.

 \item Интеграл $H(J_1, J_2)$ расслаивает $\mathbb{R}^3(\textbf{J})$ на соосные круговые цилиндры и особую прямую $J_1 = \sqrt{\varkappa}c_1, J_2 = 0$. Значение $b_0 = \sqrt{\varkappa^3 c_1^4}$ соответствует касанию оси цилиндров и орбиты $J_1^2 +J_2^2 + J_3^2 = b \varkappa^{-1/2}$.

Рассмотрим множество троек $(f_l(b), b, h)$, для которых $Q_{f_l(b), b, h}$ не диффеоморфно $S^1$ или $2 S^1$. Для каждой сингулярной орбиты $M^2_{a, b}$ таковыми являются две точки $p_1^{\pm}$ касания цилиндров и сферы, а при $b_0 < b$ также две точки $p_2^{\pm}$ пересечения оси цилиндров и сферы. Нетрудно найти их координаты:
\[p_1^{\pm}: \quad \left(\pm \varkappa^{-1/4} \sqrt{b}, 0, 0\right), \qquad p_2^{\pm}: \quad \left(\sqrt{\varkappa} c_1, 0, \pm \sqrt{b\varkappa^{-1/2} - \varkappa c_1^2}\right).\]

При $0 <b \le b_0$ минимум и максимум $H$ достигаются в точках $p_1^{-}$ и $p_1^{+}$ соответственно. При $b_0 < b$ слой, содержащий точку $p_1^{+}$, диффеоморфен окружности, а максимум $H$ достигается в точках $p_2^{\pm}$. При этом $K(p_1^{\pm}) >0$ и $K(p_2^{\pm}) = 0$.

 \item Из леммы \ref{L: h_coord_points} при $a = f_{l}(b)$ имеем равенства $z_{-r} = z_{-l}$, $z_{+l} = z_{+r}$, $z_{lt} = z_{rt}$. Явно проверено, что $H(p_1^{-}) = h_{-l}, H(p_1^{+}) = h_{+l}$ и $H(p_2^{\pm}) = h_l = h_{rt}$. Равенство $H(p_2^{\pm}) = h(z_1)$ по непрерывности следует из расположения $z_1$ между вершинами парабол \eqref{Eq:Left Parabola} и \eqref{Eq:Right Parabola}. Этих данных достаточно, чтобы описать $\mathbb{A}^2$ в окрестности плоскостей $a = f_l(b)$.
\end{enumerate}
Лемма \ref{L:singular_orbits} доказана.

\subsection{Доказательство утверждения \ref{A:Curves_sep_set_2}}\label{s:Proof_proof_A:Curves_sep_set_2} Достаточно сравнить значения функций $h_i(a, b)$ и $h(a, b, z_i(a, b))$ из леммы \ref{L: h_coord_points} для 36 пар поверхностей. 

\begin{enumerate}
 \item Сравнение значений $h$ поверхностей $\pm l, \pm r, lt, rt$ не представляет сложностей. Для сравнения $h_{cusp}$ и значения $h$ для поверхностей $\pm l, \pm r$ сделаем замену $w = w(u, v) >0$ и получим кубический многочлен с кратным корнем. Например, для $-l$\, замена и многочлен имеют вид $w = u^{1/3} v^{-1/6} \tau^{-1/3}$ и $(w-2)(w+1)^2$ соответственно. Для пар $(lt, cusp)$ и $(rt, cusp)$ используется похожая замена $w = u^{-2/3} v^{1/3} \tau^{2/3}$.

Значения функций $h_{+l}$ и $h_{cusp}$, $h_{+r}$ и $h_{cusp}$ равны на кривых $v = u^2/\tau^2, u > \tau^2$ и $u = v^2/\tau^2, 0 < v < \tau$ соответственно. Две эти кривые являются образом кривой $f_r$ в координатах $(u, v)$.

  \item Функции $h_l$ и $h_{+l}$ равны на кривой $a = f_t (b(t), t), b = b(t)$. Приняв за $t = \sqrt{u / (2\varkappa)}$, и подставив $a = a(u, v), b = b(u, v)$, получим уравнение кривой $f_{14}$.
Отметим, что  при $u \in (0, \tau^2)$ совпадают точки трех поверхностей: $+l$, $l$ и $z_1$. При $v < f_{14}(u), 0\le u < \tau^2$ точкам поверхностей $l$ и $z_1$ не соответствует особых точек $\Sigma^{u, v}$.

 Сравнение значений $h_l$ и $h_{int}$ друг с другом или с одной из функций $h_{-l}$, $h_{\pm r}$, $h_{lt}$, $h_{rt}$ не представляет сложностей. Отметим, что образ кривой $f_m$ в координатах $(u, v)$ является объединением отрезка $v = \tau^2, 0 \le u \le \tau^2$ и луча $u = \tau^2, v \ge \tau^2$, на которых равны значения пар функций $h_{int}$ и $h_{rt}$, $h_{int}$ и $h_{lt}$  соответственно.

 \item Кривая $v = f_{17}(u)$, в точках которой равны $h_l$ и $h_{cusp}$, и луч $v =u, u > 0$ не имеют общих точек кроме точки $(\tau^2, \tau^2)$, поскольку значение функции $u f_{17}(u) - u^2 > 0$ положительно в точках ее экстремумов $u = \tau^2$ и $u = \tau^2 /4$ и в точке $u =0$.

 \item Пару $cusp$ и $int$ рассмотрим в координатах $(a, b)$. Полученная кривая $a = f_{19}(b)$ не войдет в $\Theta$, т.к. она лежит ниже прямой $a = f_l(b)$ при $b >0, b \ne \varkappa^{3/2} c_1^2$:
\[f_{19}(b) = 3 b^{2/3} \varkappa c_1^{2/3} - \varkappa^{2} c_1^2 < 2\sqrt{\varkappa} b = f_l(b), \quad \textrm{т.к. } \quad 0 < (b^{1/3} +1/2 \varkappa^{1/2}c_1^{2/3})(b^{1/3} - \varkappa^{1/2}c_1^{2/3})^2.\]
Т.к. при $b >0$ полученный многочлен от $b^{1/3}$ неотрицателен, и равен нулю только при $b = \varkappa^{3/2} c_1^2$, то единственной точкой кривой $f_{19}$ в $\Theta$ является вершина $2$.
\end{enumerate}
Утверждение \ref{A:Curves_sep_set_2} доказано.

\subsection{Уравнения кривых $f_l, f_t, f_k, f_r, f_m$ в координатах $(q, s)$}\label{s:Proof_proof_L:bif_curves_sq}

\begin{enumerate}
\item  Подставив выражения $u(s, q)$ и $v(s)$ в уравнения кривых $f_1, f_4$,\, $f_9, f_{14}$ из таблицы \ref{Tab:sep_curves_2.1}, легко получить уравнения из таблицы \ref{Tab:sep_curves:q_s-type} для кривых $f_l, f_t, f_r$ и $f_m$.

\item  Проделав то же самое с уравнением кривой $f_k$ в координатах $(u, v)$, получим квадратное уравнение относительно $s$.
 \begin{equation}\label{Eq:Function_F_k_uv}
 \cfrac{u+v}{2} = \cfrac{3 (uv)^{2/3}}{4^{5/3} \varkappa^{2/3} c_1^{2/3}} + \cfrac{3 \varkappa c_1^{2/3}(uv)^{1/3} }{2(4\varkappa)^{1/3}} - \cfrac{1}{4}\varkappa^2 c_1^2
 \end{equation}

\[ \cfrac{\tau^2}{2}\, s^2 + s \left( 2 - 3 \tau^{2/3} q^{2/3} \right) + 2q^2 - \cfrac{3 q^{4/3}}{\, 2 \tau^{2/3}} = 0.\]
\[f_k: \quad s = f_k^{\pm}(q) = \cfrac{1 - 3x(q) \pm 2 x^{\frac{3}{2}}(q)}{\tau^2}, \quad \textrm{где}\,\, x(q) = 1 - q^{2/3}\tau^{2/3}.\]

\item Выбор знака перед радикалом, т.е. нужной ветви $f_{k}^{-}$ или $f_{k}^{+}$, определяется попаданием данной ветви в $E$, т.е. условием $f_m(q) \le f_{k}(q) \le f_l(q)$ на промежутке $q \in (0, 1]$.

Найдя корни уравнений $f_k^{\pm}(q) = f_l(q)$ и $f_k^{\pm}(q) = f_m(q)$ с учетом их кратностей, легко увидеть, что $f_k^{-}(q) \ge f_m(q) \ge f_{k}^{+}(q) \ge f_l(q)$ при $0 < q \le \tau^{-1}$. Значит, $f_k = f_k^{+}$.
\end{enumerate}
Лемма \ref{L:bif_curves_sq} доказана.

\subsection{Доказательство утверждения \ref{A:curves_sep_set_1}}\label{s:Proof_proof_A:curves_sep_set_1}

Подробно обоснуем формулы для кривой $f_{20}$, разделяющей точки $+r$ и $rootr$. Небольшие отличия для остальных случаев будут указаны в пункте 3.

 1. Приравняем $h(z) = h(z_{\delta r})$, тогда $z_{\delta r}$ и один из нулей параметрической кривой $z_{rootr}, z_{rootl}$ сопряжены как корни этого уравнения
\[\cfrac{u v c_1^2}{4 \varkappa z^2} + 2z = \cfrac{v + 2 \delta \tau \sqrt{u}}{2\varkappa}.\]
Разделим многочлен на двучлен $z - z_{\delta r} = \left(z - \delta c_1 \sqrt{u /2} \right)$ и вычислим корни квадратного трехчлена:
\[8\varkappa  z^3 - z^2 (2 v + 4\sqrt{2}\delta \varkappa c_1 \sqrt{u}) +uv c_1^2 = \left(z - \delta c_1 \sqrt{u /2}\right) (8 \varkappa z^2 - 2 v z - \sqrt{2}\delta c_1 \sqrt{u} v),\]
\[z_{root} = \cfrac{v}{8\varkappa} \left( 1 \pm \sqrt{1 + 8 \delta \tau \sqrt{u}/v }\right) = \cfrac{v}{8\varkappa} w_{x,+r}(u, v).\]

В выражении $w_{x, y}(u, v)$ индексы $x \in \{rootl, rootr\}$ и $y \in \{+l, -r, +r\}$ указывают, абсциссы какой пары особых точек $\Sigma^{a, b}$ были приравнены. Правило выбора знака в выражении $w(u, v)$ с индексами $x, y$ опишем в пункте~3. Получили, что  $w(u,v)$ является функцией одной переменной $q = \sqrt{u}/ v$ в случае $y = \pm r$ или одной переменной $\sqrt{v}/u$ в случае $y = +l$.

2.  Подставим найденные $z_{root}$ в уравнение $k(z) = 0$\, и разделим на $v^2$:
\[k(z) = \left(2 (u + v) c_1^2 - 4\cfrac{uv c_1^2}{4\varkappa z} + \cfrac{u^2 v^2 c_1^4}{(4\varkappa)^2 z^4}\right) -2 \varkappa c_1^2 h(z) + \varkappa^2 c_1^4 = 0,\]
\[2 c_1^2 \cfrac{u}{v^2}  + 2 c_1^2 \cfrac{1}{v}  - \cfrac{8 u  c_1^2}{v^2 w(q)} + \cfrac{u^2 c_1^4 8^4\varkappa^4}{(4\varkappa)^2 v^4 w^4(q)} -2 \varkappa c_1^2 \cfrac{1}{2\varkappa v} -2 \sqrt{2} \delta \varkappa c_1^3\cfrac{\sqrt{u}}{v^2}  + \cfrac{\varkappa^2 c_1^4}{v^2} =0.\]

Перепишем уравнение в координатах $(q, s)$:
\[\varkappa^2 c_1^2 s^2  + (1 -2 \delta \tau q) s + 2 q^2  - 8\cfrac{q^2}{w(q)} + 128 \tau^2 \cfrac{q^4}{w^4(q)}     =0\]

\[s = \cfrac{-(1 -2 \delta \tau q) \pm \sqrt{(1 -2 \delta \tau q) ^2 - 2 \tau^2 \left( 2 q^2  - 8\cfrac{q^2}{w_{x, +r}(q)} + 128 \tau^2 \cfrac{q^4}{w_{x, +r}^4(q)} \right)\mathstrut}}{\tau^2 }\]
Здесь $w = w_{x, +r} = \left( 1 \pm  \sqrt{1 + 8 \delta \tau q } \right)$. Получена явная формула кривой $s(q)$, содержащей те и только те точки $(q,s)$ для которых абсцисса корня совпадает с абсциссой $z_{+r}$.

 3. Опишем выбор знаков для $s(q)$, $w_{x, y}(q)$ и $\sigma_i$. Достаточно описать пары $(x, y) \in \{(rootr, +r), (rootl, -r), (rootr- r)\}$, т.к. для пары $(rootr, +r)$ вычисления проходятся аналогично случаю $(rootr, +r)$, в координатах $(\widetilde q, \widetilde s) = (q(v, u), s(u))$.

    Из пункта 1 видно, что знак $\delta_i = \textrm{sgn}\,z_{\pm r}$, т.е. $\sigma_i = -\delta_i, w_i(q) = 1 \pm \sqrt{1 - 8 \sigma_i \tau q }$. Т.е. под радикалом функции $w_{x, y}(q)$ выбирают знак "плюс"\, для точки $+r$ и "минус"\, для $-r$.

    Знак перед радикалом в $w_{x, -r}(q)$ определяет выбор корня --- знак "плюс"\, для $rootr$ и знак "минус"\, для $rootl$. Отметим, что $h(+r) > h(rootl)$, поэтому этой паре не соответствует дуг $\Theta$.

    Знак перед внешним радикалом в итоговой функции $s(q)$ определяется двумя требованиями: неотрицательностью подкоренного выражения и принадлежностью кривой области $E$. Таким образом, перед внешним корнем надо выбрать знак "плюс"\, для всех трех кривых $f_{20}, f_{21}, f_{22}$.

    Получили указанные в утверждении выражения для $\sigma_i, w_i(q)$ и $f_i(q), i \in \{20, 21, 22\}.$ Утверждение \ref{A:curves_sep_set_1} доказано.

\subsection{Доказательство утверждения \ref{A:Sep_set_1_arcs}}\label{s:Proof_proof_A:Sep_set_1_arcs}
Кривые $f_i, i \in \{1, 4, 5, 9, 10, 14\}$ с указанными в таблице \ref{Tab:sep_curves_2.1} областями определения являются образами кривых $f_l, f_t,$ $f_r, f_m$ при замене $(a, b) \rightarrow (u, v)$, и потому входят в $\Theta$.

Остальные кривые $f_i$ соответствуют парам пересекающихся поверхностей. Проекция кривой их пересечения или касания на $Oab$ имеет конечное число общих точек с границами областей I-IX. Для каждой кривой назовем подходящими те области, где обе особые точки входят в $\Sigma^{a, b}$. Дуга кривой, лежащая в такой области, войдет в разделяющее множество $\Theta$.

\begin{enumerate}
 \item Для кривых $f_2, f_3, f_7, f_8, f_{15}$ подходящими являются области I-V, V, I-IV, I-III, I-V. На всей области определения, указанной в таблице \ref{Tab:sep_curves_2.1}, каждая из кривых содержится в замыкании своих подходящих областей. Достаточно проанализировать порядок роста данных кривых, а для кривой $f_8$ --- значения $f_t', f_k', f_8'$ и $f_r'$ в точке $(\tau^2, \tau^2) \in Ouv$.

Для кривой $f_{16}$ области I, VI и VII являются подходящими. Кривая лежит именно в них, поскольку $f_{t}(u) = f_{14}(u) < f_{16}(u) < \tau^2$ при $0 < u < \tau^2$ и $f_l(u) < f_{16}(u) < f_t(u)$ при $u > \tau^2$. Это легко обосновать, рассмотрев $f_{14}' - f_{16}'$ при $u < \tau^2$ и $f_{16}'''(u), f_{14}'''(u)$ при $u = \tau^2 +0$.

 \item Для кривой $f_{18}$ подходящей является область VII. Данная кривая $v = \tau^2 /2$ пересекает кривую $f_r: v(u) = \tau\sqrt{u}$ в точке $(\tau^2/4, \tau^2/2)$, т.е. при $u < \tau^2/4$ график $f_{18}(u)$ лежит в VII.

 \item На кривой $f_{13}$ совпадают абсциссы особых точек $-r$ и $cusp$. Подходящими областями являются III, IV, V, VII, IX. При $0 < u < \tau^2$ имеем $f_{13}(u) > f_5(u)$, т.е. график лежит в подходящих областях V, VII и IX. Порядок роста функции $f_{13}$ равен $1/2$, т.е. график пересечет кривые $f_4$,  $f_k$ и $f_{14}$. Найдем точку пересечения $u_0$ кривых $f_{13}$ и $f_k$, подставив $v = 8 \tau \sqrt{u}$ в \eqref{Eq:Function_F_k_uv}:
\begin{center}
$u+8\tau \sqrt{u} = 3 u + 3 \tau \sqrt{u}  - \cfrac{\tau^2}{4}$

$ 2u - 5 \tau \sqrt{u} - \cfrac{\tau^2}{4} = 0, \qquad   u_0
=  \cfrac{(5 + 3\sqrt{3})^2 \tau^2}{16}.$
\end{center}

 \item В точках пересечения кривых $f_{17}$ и $f_k$ имеем $h_{cusp} = h_l$ и $cusp = rootl = rootr$. Значит, в ней $h_{rootl} = h_l$, и эта точка принадлежит $f_t$. Точка $(\tau^2, \tau^2)$ является единственной общей точкой трех кривых $f_t, f_{17}, f_k$, т.е. график $f_{17}$ лежит ниже графика $f_{k}$ и не попадает в подходящие области (III и IV) при $u > \tau^2$.

Кривая $f_{17}$ пересекает кривую $f_r$ в точке $(\tau^2/2, \tau^2/ \sqrt{2})$, области V и VII являются подходящими, а область VIII нет.

\item  Дуга $v < \tau^2$ кривой $f_6$ находится в областях VI и VIII, которые не являются для нее подходящими: при $v < \tau^2$ имеем $f_r(u) > f_{6}(u) > u$.

\begin{equation}
f_{r}(v) = \cfrac{v^2}{\tau^2} \lor \cfrac{2v^{3/2}}{\sqrt{v}+\tau^2} = f_{6}(v) \quad \Rightarrow \quad  (v + \sqrt{v} \tau^2  - 2 \tau^4) v^{3/2} \lor 0.
\end{equation}
Аналогично, при $0 < u <\tau^2 /2$ график $v = f_{11}(u)$ лежит ниже прямой $v =0$, а при $\tau^2/2 < u <\tau^2$ график лежит выше прямой $v = \tau^2$, т.е. в не подходящей для $f_{11}$ области V. Из утверждения \ref{A:Curves_sep_set_2} следует, что график кривой $a = f_{19}(b)$ лежит вне $D$.

 \item Функции $f_{20}, f_{22}, f_{24}$ определены при сколь угодно близких к нулю значениях $q$, т.е. при сколь угодно больших $v$. Эти кривые $s = f_i(q)$ лежат выше кривой $s = f_l(q) = q^2$. Равенство абсциссы $h$ для нуля параметрической кривой и одной из точек типов $-r, +r, +l$ возможно только при $f_l(b) \le a \le f_k(b)$, т.к только в областях I и II параметрическая кривая \eqref{Eq:Parametric Curve} имеет нули, отличные от $z_1$. Напомним, точка $z_1$ на плоскости $Ohk$ всегда лежит правее остальных особых точек.

  Для кривой $f_{21}$ данный факт следует из рассмотрения области определения радикала. Самая правая на плоскости $Osq$ точка кривой $s = f_{21}(q)$ лежит на кривой $f_t$.
\end{enumerate}
Утверждение \ref{A:Sep_set_1_arcs} доказано.

\subsection{Доказательство утверждения \ref{A:sepset_22}}\label{s:Proof_proof_A:sepset_22}

\begin{enumerate}
 \item В координатах $(u, v)$ кривые $f_{24} = f_k, f_{20}$ и $f_{23}$, выходящие из вершины $2$ c координатами $(\tau^2, \tau^2)$, заданы неявно. Вид разделяющего множества для ``средних'' по величине значений $(u, v)$, изображенных на рисунке \ref{Fig:Plane_Sep_uv_big}, достоверен. Устройство $\Theta$ в окрестности этой вершины $\Theta$ докажем аналитически.

На кривой $f_{20}$ равны $h_{rootr}$ и $h_{+l}$, на кривой $f_{23}$ --- $h_{rootr}$ и $h_{+r}$. При этом $h(+r) = h(+l)$ на кривой $f_3$ при $u \le \tau^2$. Т.е. $f_{20}$ и $f_{23}$ имеют одну общую точку $(\tau^2, \tau^2)$.

Кривая $f_{23}$ может пересечь $f_{k}$ только в точке $(u, v)$, для которой $h(+l) = h(cusp)$, т.е. на кривой $f_4 = f_r$ при $u \ge \tau^2$. Значит, $f_k$ лежит выше кривой $f_{23}$ при $u > \tau^2$.  Т.к. $f_{20}$ пересекает $f_{23}$ только в вершине $2$, то она тоже не пересекает $f_k$ при $u >\tau^2$.

\item  Через точку пересечения $f_8$ и $f_4$ пройдет кривая $f_3$. Т.е. это только точка $(\tau^2, \tau^2).$

Пару кривых $f_8$ и $f_k$ рассмотрим в координатах $(q, s)$. Уравнение $f_k$ выведено в лемме \ref{L:bif_curves_sq}, а уравнение $f_8$ несложно найти: $s >0$, и в следующем уравнении требуется выбрать знак $+$
\[s = f_8(q) = \cfrac{q}{2\tau}\left(-1 \pm \sqrt{1 + 8 \tau q}\right).\]

Численное решение уравнения $f_8(q) = f_k(q)$ дает корень $q_0 = 1/\tau$ и корень $q_1 \approxeq 0,57$, отделенный от нуля и $q_0$. При этом в точке $q_0$ имеем $f_8'(q_0-0) = 5\sqrt{2}/6 < \sqrt{2} = f_k'(q_0-0)$. Значение $q_0' \approxeq 0, 65$ является ближайшей к $q_0$  точкой равенства $f'_k$ и $f_8'$.

Из теории вычислительных методов следует, что достаточно исследовать достаточно малую окрестность $q_0$ на наличие общих точек этих кривых. Поскольку производные непрерывны, достаточно показать сохранение знака $f_k'(q)- f_8'(q)$ в некоторой окрестности.

Докажем, что на $(q_0', q_0)$ нет нулей второй производной, т.е. там $f_k''(q) > f_8''(q)$. Используем оценку:

Вторая производная $f_k''(q)$ имеет асимптотику $(q-q_0)^r, -1 < r < 0$, а $f_8''(q)$ определена и конечна в этой точке.

Для всех $q$ из промежутка $0, 69 = p <q < q_0$ верна оценка \[f_8''(q) < -\cfrac{8 p \sqrt{2}}{\left( 1 + 8 q_0 \sqrt{2} \right)} + \cfrac{4}{p \sqrt{1 + 8 q_0 \sqrt{2}}} < \cfrac{2^{2/3}}{3 q_0 \sqrt{1 - 2^{1/3} p^{2/3}}} - \cfrac{2}{p^{4/3}} < f_k''(q). \]

С помощью сеточного разбиения и оценки на третьи производные доказывается отсутствие нуля $f_k''-f_8''$ на промежутке $[q_0', p]$. Поскольку между нулями непрерывной $f'$ лежит ноль $f''$, то на промежутке $q_1, q_0$ разность $f_k - f_8$ не обращается в ноль и не меняет знака.
\end{enumerate}
Утверждение \ref{s:Proof_proof_A:sepset_22} доказано.

\subsection{Доказательство утверждения \ref{A:no_faraway_intersections}}\label{s:Proof_proof_A:no_faraway_intersections}

\begin{enumerate}
 \item Кривые $f_l, f_t, f_k, f_r, f_m$ не пересекаются друг с другом при больших значениях $u^2 + v^2$. Кривые $f_7$ при $u \rightarrow \tau^2 + 0$ и $f_{17}$ при $u \rightarrow +0$ неограниченно возрастают, приближаясь справа к прямым $f_m$ и $u =0$ соответственно.

 Все кривые, возможно, кроме $f_k$ и $f_{20}, ..., f_{23}$, монотонно возрастают при больших $u$ с известной скоростью. Кривая $f_7$ не пересекается с $f_{15}$ при больших $u$, т.к. $f_7(u) -  f_{15}(u) = const_7 - const_{15} + o(1)$, где $const_7 = 2\tau^2 > \tau^2 = const_{15}$. Аналогично в случае кривых $f_8$ и $f_{16}$.

Кривая $f_2$ имеет порядок роста как $u + const \sqrt{u} + ...$. Начиная с некоторого $u$ кривая $f_2$ лежит ниже всех остальных кривых, значит, это свойство сохранится далее из монотонности кроме $f_l$ на большом удалении $u^2 +v^2$.

\item  Докажем, что кривая $f_{22}$ лежит между $f_7$ и $f_{15}$ при больших u. Все три кривые неограниченно возрастают при $q \rightarrow 0$.
На кривой $f_7$ равны абсциссы $h_l = h_{-r}$, на кривой $f_{15}$ --- $h(z_{lt} = h_{-r}$, на кривой $f_{22}$ имеем $h_{rootr} = h_{-r}$. При этом точка $rootr$ лежит левее точки $z_{lt}$ и правее точки $h_l$. Значит, данные кривые не могут пересекаться. Аналогично кривая $f_{20}$ лежит между $f_8$ и $f_{16}$ при больших $u$.

\item Кривая $f_{23}$, в точках $(u, v)$ которой $h_{rootr} = h_{+l}$, лежит на $Ouv$ между $f_k$ и $f_t$ при всех отмеченных на чертеже $u$. На кривой $f_k$ имеем совпадение особых точек:  $rootr = rootl = cusp$. При этом кривая $h(z_{+l}) = h(z_{cusp})$ есть кривая $f_r$ при $u > \tau^2$. Аналогично, на кривой $f_t$ $+l = h_l = rootl$. Поскольку $rootr = rootl$ есть кривая $f_k$, то кривые $f_t$ и $f_{20}$ не пересекаются при $u > \tau^2$.

 \item Кривые $f_t, f_r$ имеют квадратичный порядок роста $u^2$, $f_k$ находится между ними, потому все три кривые повторно не пересекутся достаточно далеко с кривыми линейного порядка роста.
\end{enumerate}

Утверждение \ref{A:no_faraway_intersections} доказано.

\clearpage

\begin{table}
\centering
\begin{tabular}{c}
\includegraphics[width=\linewidth]{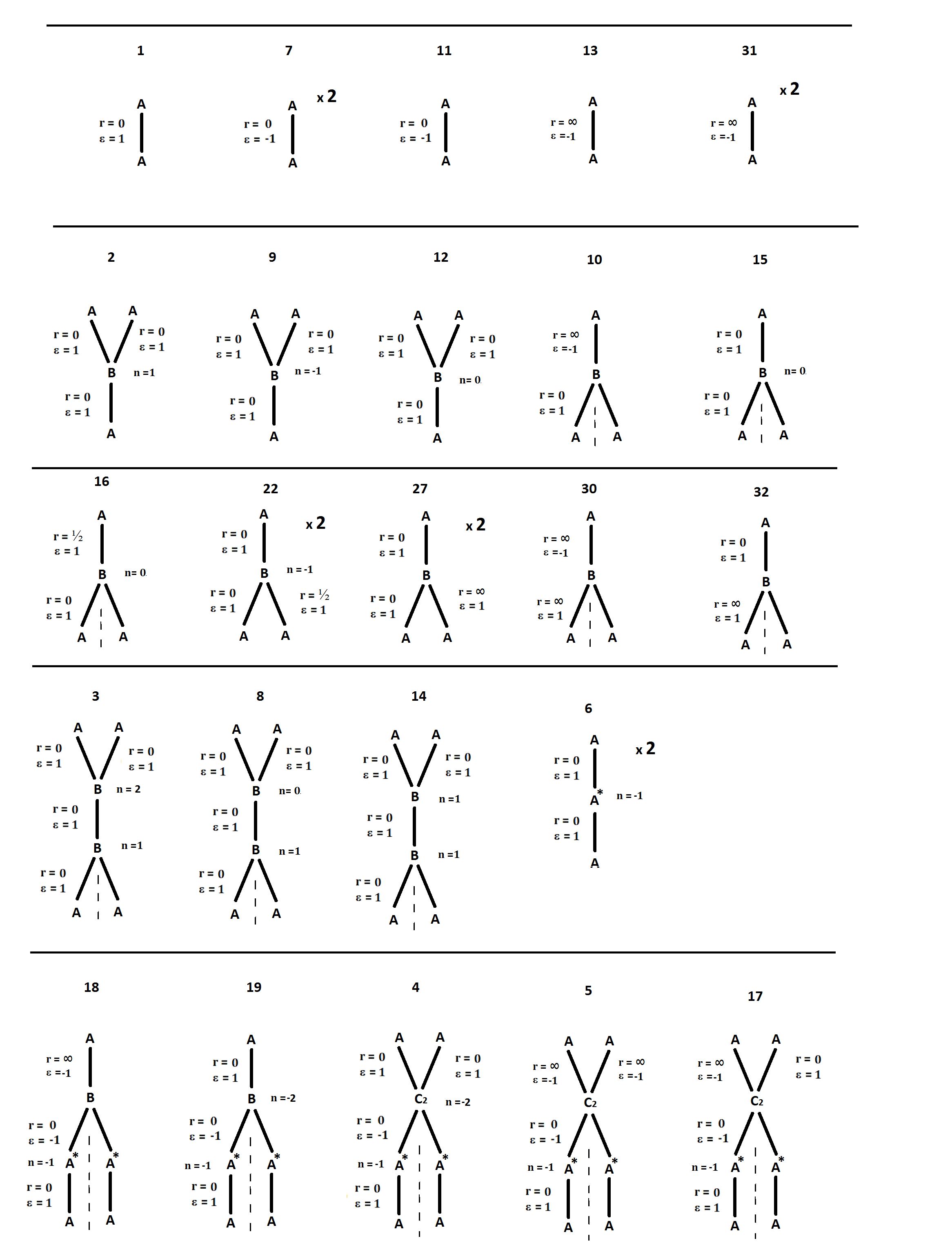}
\end{tabular}
   \caption{Список молекул с матрицами склейки. Часть 1.} \label{Tab:Molecules_classes_labeled_1}
\end{table}

\begin{table}
\centering
\begin{tabular}{c}
\includegraphics[width=1.1\linewidth]{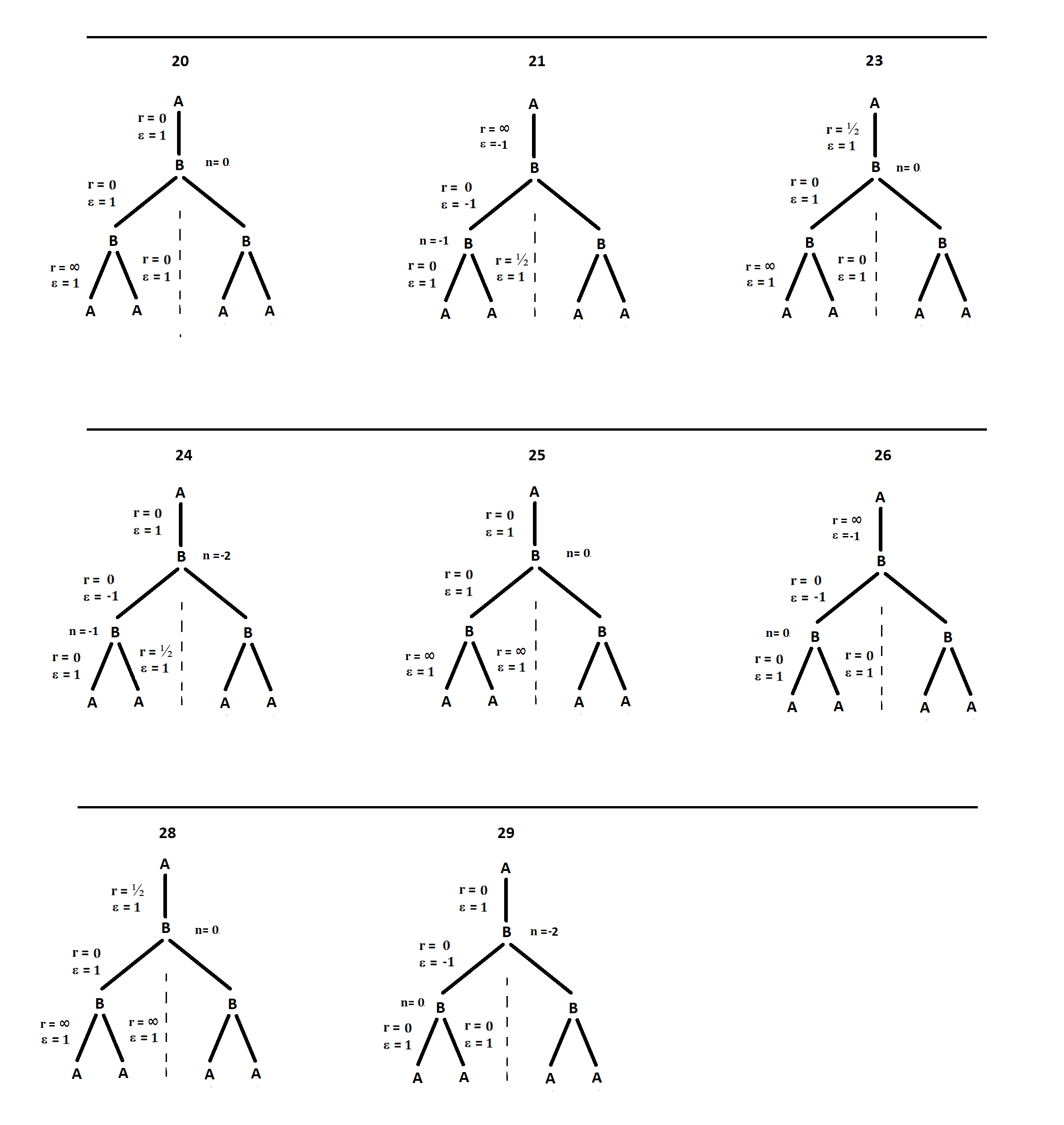}
\end{tabular}
   \caption{Список молекул с матрицами склейки. Часть 2.} \label{Tab:Molecules_classes_labeled_2}
\end{table}

\begin{table}
\centering
\begin{tabular}[t]{|l|c|c|c|c|c|c|c|c|c|c|c|c|c|c|c|c|c|c|c|c|c|c|c|c|c|c|c|}
\hline
класс $L_i$ & 1      & 2   & 3 & 4 & 5 & 6 & 7 & 8&  9&      10& 11& 12 & 13\\
\hline
молекула & 1, 7, 11 & 2, 9 & 3 & 4 & 5 & 6 & 8 & 10& 12, 15& 13, 31& 14 & 16 & 17\\
\hline
\end{tabular}
\begin{tabular}[t]{|l|c|c|c|c|c|c|c|c|c|c|c|c|c|c|c|c|c|c|c|c|c|c|c|c|c|c|c|}
\hline
класс $L_i$& 13 & 14 & 15& 16& 17& 18& 19 & 20 & 21& 22 & 23 & 24 & 25 & 26 & 27  \\
\hline
молекула & 17 & 18& 19& 20& 21& 22& 23 & 24 & 25& 26 & 27 & 28 & 29 & 30 & 32 \\
\hline
\end{tabular}
\caption{Классификация слоений на связных компонентах $Q^3_{a, b, h}$.}
\label{Tab:Liouville_classes_graphs}
\end{table}

\begin{table}
\centering
\begin{tabular}[t]{|c|c|c|c|}
\hline
 символ & дуга бифуркационной диаграммы & атом & семейство торов \\
\hline
$\xi_1$ & $(z_4, z_3), (z_4, z_5), (z_4, z_{11}), (z_4, z_8), (z_7, z_5), (z_7, z_8)$   & A & (1)  \\
\hline
$\xi_2$ & $(z_3, z_2), (z_5, z_2)$ & 2 A & (3) \\
\hline
$\xi_3$ & $(z_2, z_1), (z_{10}, z_1)$ & 2 A & (2) \\
\hline
$\xi_4$ & $(z_6, z_5), (z_6, z_8), (z_7, z_5), (z_7, z_8)$   & A & (4) \\
\hline
$\xi_5$ & $(z_8, z_9), (z_8, z_{10}), (z_{11}, z_{10}), (z_{11}, z_{9})$ & A & (1) \\
\hline
\end{tabular}\caption{Новые семейства дуг бифуркационных диаграмм.}
\label{Table_arcs}
\end{table}

\begin{table}
\begin{tabular}[t]{|c||c|c|c|c|c|c|c|c|c|c|c|c|c|c|c|c|c|c|}
\hline
X.1  & $y_{1}$ & 1 & $y_4$ & 2 &$y_{2}$& 3 & $y_{3}$ & 4 & $z_7$ & 5 & $z_{5}$ & 6 & $z_{2}$ & 7 & $z_1$ \\
\hline
X.2  & $y_{1}$ & 1 & $y_4$ & 2 &$y_{2}$& 3 & $z_7$ & 8 & $y_{3}$ & 5 & $z_{5}$ & 6 & $z_{2}$ & 7 & $z_1$ \\
\hline
\hline
XI.1 & $y_{1}$ & 1 & $y_4$ & 2 & $z_7$ & 9 &$y_{2}$& 8 & $z_{8}$ & 10 & $z_{10}$ & 7 & $z_1$ & &  \\
\hline
XI.2 & $y_{1}$ & 1 & $y_4$ & 2 & $z_7$ & 9 & $z_{8}$ & 11 &$y_{2}$& 10 & $z_{10}$ & 7 & $z_1$ & & \\
\hline
\hline
XII  & $y_{1}$ & 1 & $y_4$ & 2 & $z_7$ & 9 & $z_{8}$ & 11 & $z_{9}$ &  & & & & &  \\
\hline
\end{tabular}
\caption{Промежутки X-XII: порядок 3-камер и особых точек.}
\label{Tab:order_region_b0}
\end{table}

\begin{table}
\begin{tabular}[t]{|c||c|c|c|c|c|c|c|c|c|c|c|c|c|c|c|c|c|c|c|c|c|}
\hline
IX.1 & $y_{1}$ & $1$ & $y_{6}$ & $2$ & $z_{4}$ & $12$ & $z_{6}$ & $9$ & $z_{8}$ & $11$ & $z_{9}$ & & & &  \\
\hline
IX.2 & $y_{1}$ & $1$ & $z_{4}$ & $13$ & $y_{6}$ & $12$ & $z_{6}$ & $9$ & $z_{8}$ & $11$ & $z_{9}$ & & & &  \\
\hline
\hline
VIII & $y_{1}$ & $1$ & $z_{4}$ & $13$ & $z_{11}$ & $11$ & $z_{9}$ & & & & & & & &  \\
\hline
\hline
VII.1 & $y_{1}$ & $1$ & $y_{6}$ & $2$ & $z_{4}$ & $12$ & $z_{6}$ & $9$ & $z_{8}$ & $11$ &$y_{2}$& $10$ & $z_{10}$ & $7$ & $z_1$   \\
\hline
VII.2 & $y_{1}$ & $1$ & $z_{4}$ & $13$ & $y_{6}$ & $12$ & $z_{6}$ & $9$ & $z_{8}$ & $11$ &$y_{2}$& $10$ & $z_{10}$ & $7$ & $z_1$   \\
\hline
VII.3 & $y_{1}$ & $1$ & $y_{6}$ & $2$ & $z_{4}$ & $12$ & $z_{6}$ & $9$ &$y_{2}$& $8$ & $z_{8}$ & $10$ & $z_{10}$ & $7$ & $z_1$   \\
\hline
VII.4 & $y_{1}$ & $1$ & $z_{4}$ & $13$ & $y_{6}$ & $12$ & $z_{6}$ & $9$ &$y_{2}$& $8$ & $z_{8}$ & $10$ & $z_{10}$ & $7$ & $z_1$   \\
\hline
VII.5 & $y_{1}$ & $1$ & $y_{6}$ & $2$ & $z_{4}$ & $12$ &$y_{2}$& $14$ & $z_{6}$ & $8$ & $z_{8}$ & $10$ & $z_{10}$ & $7$ & $z_1$   \\
\hline
VII.6 & $y_{1}$ & $1$ & $z_{4}$ & $13$ & $y_{6}$ & $12$ &$y_{2}$& $14$ & $z_{6}$ & $8$ & $z_{8}$ & $10$ & $z_{10}$ & $7$ & $z_1$   \\
\hline
VII.7 & $y_{1}$ & $1$ & $z_{4}$ & $13$ &$y_{2}$& $15$ & $y_{6}$ & $14$ & $z_{6}$ & $8$ & $z_{8}$ & $10$ & $z_{10}$ & $7$ & $z_1$   \\
\hline
\hline
VI.1 & $y_{1}$ & $1$ & $z_{4}$ & $13$ & $z_{11}$ & $11$ &$y_{2}$& $10$ & $z_{10}$ & $7$ & $z_1$ & & & &   \\
\hline
VI.2 & $y_{1}$ & $1$ & $z_{4}$ & $13$ &$y_{2}$& $15$ & $z_{11}$ & $10$ & $z_{10}$ & $7$ & $z_1$ & & & &  \\
\hline
\end{tabular}
\caption{Области VI-IX: порядок 3-камер и особых точек.}
\label{Tab:order_region_small}
\end{table}

\begin{table}
\begin{tabular}[t]{|c||c|c|c|c|c|c|c|c|c|c|c|c|c|c|c|c|c|c|c|c|c|c|c|c|c|c|c|c|c|c|c|c|c|}
\hline
V.3 & $y_{1}$ &  $1$ & $z_{4}$ & $13$ &$y_{2}$& $15$ & $y_{6}$ & $14$ & $z_{6}$ & $8$ & $y_{3}$ & $5$ & $z_{5}$ & $6$ & $z_{2}$ & $7$ & $z_1$  \\
\hline
V.4 & $y_{1}$ &  $1$ & $z_{4}$ & $13$ &$y_{2}$& $15$ & $y_{6}$ & $14$ & $y_{3}$ & $17$ & $z_{6}$ & $5$ &$z_{5}$   & $6$ & $z_{2}$ & $7$ & $z_1$ \\
\hline
V.5 &$y_{1}$ &  $1$ & $y_{2}$& $16$ & $z_{4}$ & $15$ & $y_{6}$ & $14$ & $y_{3}$ & $17$ & $z_{6}$ & $5$ & $z_{5}$   & $6$ & $z_{2}$ & $7$ & $z_1$ \\
\hline
V.6 &$y_{1}$ &  $1$ & $y_{2}$& $16$ & $z_{4}$ & $15$ & $y_{6}$ & $14$ & $z_{6}$ & $8$ & $y_{3}$ & $5$ & $z_{5}$    & $6$ & $z_{2}$ & $7$ & $z_1$\\
\hline
V.7 & $y_{1}$ &  $1$ & $y_{2}$& $16$ & $y_{6}$ & $3$ & $z_{4}$ & $14$ & $z_{6}$ & $8$ & $y_{3}$ & $5$ & $z_{5}$    & $6$ & $z_{2}$ & $7$ & $z_1$\\
\hline
V.9 &$y_{1}$ &  $1$ & $y_{2}$& $16$ & $y_{6}$ & $3$ & $z_{4}$ & $14$ & $y_{3}$ & $17$ & $z_{6}$ & $5$ & $z_{5}$    & $6$ & $z_{2}$ & $7$ & $z_1$\\
\hline
V.12& $y_{1}$ &  $1$ & $y_{2}$& $16$ & $y_{6}$ & $3$ & $y_{3}$ & $4$ & $z_{4}$ & $17$ & $z_{6}$ & $5$ & $z_{5}$    & $6$ & $z_{2}$ & $7$ & $z_1$\\
\hline
V.2 & $y_{1}$ &  $1$ & $z_{4}$ & $13$ & $y_{6}$ & $12$ &$y_{2}$& $14$ & $z_{6}$ & $8$ & $y_{3}$ & $5$ & $z_{5}$    & $6$ & $z_{2}$ & $7$ & $z_1$\\
\hline
V.1 & $y_{1}$ &  $1$ & $y_{6}$ & $2$ & $z_{4}$ & $12$ &$y_{2}$& $14$ & $z_{6}$ & $8$ & $y_{3}$ & $5$ & $z_{5}$   & $6$ & $z_{2}$ & $7$ & $z_1$\\
\hline
V.8 & $y_{1}$ &  $1$ & $y_{6}$ & $2$ &$y_{2}$& $3$ & $z_{4}$ & $14$ & $z_{6}$ & $8$ & $y_{3}$ & $5$ & $z_{5}$   & $6$ & $z_{2}$ & $7$ & $z_1$\\
\hline
V.10& $y_{1}$ &  $1$ & $y_{6}$ & $2$ &$y_{2}$& $3$ & $z_{4}$ & $14$ & $y_{3}$ & $17$ & $z_{6}$ & $5$ & $z_{5}$   & $6$ & $z_{2}$ & $7$ & $z_1$ \\
\hline
V.11& $y_{1}$ &  $1$ & $y_{6}$ & $2$ &$y_{2}$& $3$ & $y_{3}$ & $4$ & $z_{4}$ & $17$ & $z_{6}$ & $5$ & $z_{5}$  & $6$ & $z_{2}$ & $7$ & $z_1$  \\
\hline
\end{tabular}
\caption{Область V: порядок 3-камер и особых точек.}
\label{Tab:order_region_V}
\end{table}

\begin{table}
\begin{tabular}[t]{|c||c|c|c|c|c|c|c|c|c|c|c|c|c|c|c|c|c|c|c|c|c|c|c|c|}
\hline
\hline
IV.1 & $y_{1}$ &  $1$ & $z_{4}$ & $13$ &$y_{2}$& $15$ & $y_{6}$ & $14$ & $y_{3}$ & $17$ & $y_{5}$ & $18$ & $z_{3}$  & $6$ & $z_{2}$ & $7$ & $z_1$   \\
\hline
IV.2 & $y_{1}$ &  $1$ &$y_{2}$& $16$ & $z_{4}$ & $15$ & $y_{6}$ & $14$ & $y_{3}$ & $17$ & $y_{5}$ & $18$ & $z_{3}$   & $6$ & $z_{2}$ & $7$ & $z_1$  \\
\hline
IV.3 & $y_{1}$ &  $1$ &$y_{2}$& $16$ & $y_{6}$ & $3$ & $z_{4}$ & $14$ & $y_{3}$ & $17$ & $y_{5}$ & $18$ & $z_{3}$    & $6$ & $z_{2}$ & $7$ & $z_1$ \\
\hline
IV.4 & $y_{1}$ &  $1$ & $y_{2}$& $16$ & $y_{6}$ & $3$ & $y_{3}$ & $4$ & $z_{4}$ & $17$ & $y_{5}$ & $18$ & $z_{3}$    & $6$ & $z_{2}$ & $7$ & $z_1$ \\
\hline
IV.5 & $y_{1}$ &  $1$ & $y_{2}$& $16$ & $y_{6}$ & $3$ & $y_{3}$ & $4$ & $y_{5}$ & $19$ & $z_{4}$ & $18$ & $z_{3}$   & $6$ & $z_{2}$ & $7$ & $z_1$ \\
\hline
\end{tabular}
\caption{Область IV: порядок 3-камер и особых точек.}
\label{Tab:order_region_IV}
\end{table}

\begin{table}
\begin{tabular}[t]{|c||c|c|c|c|c|c|c|c|c|c|c|c|c|c|c|c|c|c|c|c|c|c|c|c|}
\hline
III.1 & $y_{1}$ &  $1$ &  $z_{4}$ & $13$ &$y_{2}$& $15$ & $y_{9}$ & $20$ & $y_{7}$ & $21$ & $z_{3}$ &  $22$ & $y_{8}$    & $6$ & $z_{2}$ & $7$ & $z_1$ \\
\hline
III.2 & $y_{1}$ &  $1$ &  $z_{4}$ & $13$ &$y_{2}$& $15$ & $y_{9}$ & $20$ & $y_{7}$ & $21$ & $y_{8}$ &  $18$ & $z_{3}$    & $6$ & $z_{2}$ & $7$ & $z_1$ \\
\hline
III.3 & $y_{1}$ &  $1$ & $y_{2}$& $16$ & $z_{4}$ & $15$ & $y_{9}$ & $20$ & $y_{7}$ & $21$ & $y_{8}$ &  $18$ & $z_{3}$    & $6$ & $z_{2}$ & $7$ & $z_1$ \\
\hline
III.4 & $y_{1}$ &  $1$ & $y_{2}$& $16$ & $y_{9}$ & $23$ & $z_{4}$ & $20$ & $y_{7}$ & $21$ & $y_{8}$ &  $18$ & $z_{3}$    & $6$ & $z_{2}$ & $7$ & $z_1$ \\
\hline
III.5 & $y_{1}$ &  $1$ & $y_{2}$& $16$ & $y_{9}$ & $23$ & $y_{7}$ & $24$ & $z_{4}$ & $21$ & $y_{8}$ &  $18$ & $z_{3}$    & $6$ & $z_{2}$ & $7$ & $z_1$ \\
\hline
III.6 & $y_{1}$ &  $1$ & $y_{2}$& $16$ & $y_{9}$ & $23$ & $y_{7}$ & $24$ & $y_{8}$ & $19$ & $z_{4}$ &  $18$ & $z_{3}$    & $6$ & $z_{2}$ & $7$ & $z_1$ \\
\hline
\end{tabular}
\caption{Область III: порядок 3-камер и особых точек.}
\label{Tab:order_region_III}
\end{table}

\begin{table}
\begin{tabular}[t]{|c||c|c|c|c|c|c|c|c|c|c|c|c|c|c|c|c|c|c|c|c|c|c|c|c|}
\hline
II.1 & $y_{1}$ &  $1$ &   $z_{4}$ & 13 &$y_{2}$& $15$ & $y_{11}$ & $25$ & $y_{7}$ & $26$ & $z_{3}$ & $27$ & $y_{10}$ & $22$ & $y_{8}$  \\
\hline
II.3 & $y_{1}$ &  $1$ &  $z_{4}$ & $13$ &$y_{2}$& $15$ & $y_{11}$ & $25$ & $y_{7}$ & $26$ & $y_{10}$ & $21$ & $z_{3}$ & $22$ & $y_{8}$  \\
\hline
II.4 & $y_{1}$ &  $1$ & $z_{4}$ & $13$ &$y_{2}$& $15$ & $y_{11}$ & $25$ & $y_{10}$ & $20$ & $y_{7}$ & $21$ & $y_{8}$ & $18$ & $z_{3}$   \\
\hline
II.5 & $y_{1}$ &  $1$ & $y_{2}$& $16$ & $z_{4}$ & $15$ & $y_{11}$ & $25$ & $y_{10}$ & $20$ & $y_{7}$ & $21$ & $y_{8}$ & $18$ & $z_{3}$  \\
\hline
II.6 & $y_{1}$ &  $1$ & $y_{2}$& $16$ & $y_{11}$ & $28$ & $z_{4}$ & $25$ & $y_{10}$ & $20$ & $y_{7}$ & $21$ & $y_{8}$ & $18$ & $z_{3}$  \\
\hline
II.7 & $y_{1}$ &  $1$ & $y_{2}$& $16$ & $y_{11}$ & $28$ & $y_{10}$ & $23$ & $z_{4}$ & $20$ & $y_{7}$ & $21$ &  $y_{8}$ & $18$ & $z_{3}$  \\
\hline
II.8 & $y_{1}$ &  $1$ & $y_{2}$& $16$ & $y_{11}$ & $28$ & $y_{10}$ & $23$ & $y_{7}$ & $24$ & $z_{4}$ & $21$ & $y_{8}$ & $18$ & $z_{3}$  \\
\hline
II.9 & $y_{1}$ &  $1$ & $y_{2}$& $16$ & $y_{11}$ & $28$ & $y_{10}$ & $23$ & $y_{7}$ & $24$ & $y_{8}$ & $19$ & $z_{4}$ & $18$ & $z_{3}$  \\
\hline
\hline
II.2 & $y_{1}$ &  $1$ & $z_{4}$ & $13$ &$y_{2}$& $15$ & $y_{11}$ & $25$ & $y_{10}$ & $20$ & $y_{7}$ & $21$ & $z_{3}$ & $22$ & $y_{8}$  \\
\hline
II.10& $y_{1}$ &  $1$ & $z_{4}$ & $13$ &$y_{2}$& $15$ & $y_{11}$ & $25$ & $y_{7}$ & $26$ & $y_{10}$ & $21$ & $y_{8}$ & $18$ & $z_{3}$  \\
\hline
II.11& $y_{1}$ &  $1$ & $y_{2}$& $16$ & $z_{4}$ & $15$ & $y_{11}$ & $25$ & $y_{7}$ & $26$ & $y_{10}$ & $21$ & $y_{8}$ & $18$ & $z_{3}$  \\
\hline
II.12& $y_{1}$ &  $1$ & $y_{2}$& $16$ & $y_{11}$ & $28$ & $z_{4}$ & $25$ & $y_{7}$ & $26$ & $y_{10}$ & $21$ & $y_{8}$ & $18$ & $z_{3}$  \\
\hline
II.13& $y_{1}$ &  $1$ & $y_{2}$& $16$ & $y_{11}$ & $28$ & $y_{7}$ & $29$ & $z_{4}$ & $26$ & $y_{10}$ & $21$ & $y_{8}$ & $18$ & $z_{3}$  \\
\hline
II.14& $y_{1}$ &  $1$ & $y_{2}$& $16$ & $y_{11}$ & $28$ & $y_{7}$ & $29$ & $y_{10}$ & $24$ & $z_{4}$ & $21$ & $y_{8}$ & $18$ & $z_{3}$  \\
\hline
II.15& $y_{1}$ &  $1$ & $y_{2}$& $16$ & $y_{11}$ & $28$ & $y_{7}$ & $29$ & $y_{10}$ & $24$ & $y_{8}$ & $19$ & $z_{4}$ & $18$ & $z_{3}$  \\
\hline
\end{tabular}
\caption{Область II: порядок 3-камер и особых точек. Нетривиальная часть.}
\label{Tab:order_region_II}
\end{table}

\begin{table}
\begin{tabular}{|c||c|c|c|c|c|c|c|c|c|c|c|c|c|c|c|c|c|c|c|c|c|c|c|c|}
\hline
I.1 &$y_{1}$ &  $1$ & $z_{4}$ & $13$ & $y_{12}$ & $30$ & $z_3$ & $31$ & $y_{13}$ & $27$ & $y_{10}$ & $22$ & $y_{8}$ & $6$ & $z_{2}$ & $7$ & $z_1$\\
\hline
I.2 &$y_{1}$ &  $1$ & $z_{4}$ & $13$ & $y_{12}$ & $30$ &$y_{13}$& $26$ & $z_{3}$ & $27$ & $y_{10}$ & $22$ & $y_{8}$ & $6$ & $z_{2}$ & $7$ & $z_1$\\
\hline
I.3 &$y_{1}$ &  $1$ & $z_{4}$ & $13$ & $y_{12}$ & $30$ &$y_{13}$& $26$ & $y_{10}$ & $21$ & $z_{3}$ & $22$ & $y_{8}$  & $6$ & $z_{2}$ & $7$ & $z_1$\\
\hline
I.4 &$y_{1}$ &  $1$ & $z_{4}$ & $13$ & $y_{12}$ & $30$ &$y_{13}$& $26$ & $y_{10}$ & $21$ & $y_{8}$ & $18$ & $z_{3}$ & $6$ & $z_{2}$ & $7$ & $z_1$\\
\hline
I.9 &$y_{1}$ &  $1$ & $y_{12}$ & $32$ & $z_{4}$ & $30$ & $z_{3}$ & $31$ &$y_{13}$& $27$ & $y_{10}$ & $22$ & $y_{8}$ & $6$ & $z_{2}$ & $7$ & $z_1$\\
\hline
I.10&$y_{1}$ &  $1$ & $y_{12}$ & $32$ & $z_{4}$ & $30$ &$y_{13}$& $26$ & $z_{3}$ & $27$ & $y_{10}$ & $22$ & $y_{8}$ & $6$ & $z_{2}$ & $7$ & $z_1$\\
\hline
I.11&$y_{1}$ &  $1$ & $y_{12}$ & $32$ & $z_{4}$ & $30$ &$y_{13}$& $26$ & $y_{10}$ & $21$ & $z_{3}$ & $22$ & $y_{8}$ & $6$ & $z_{2}$ & $7$ & $z_1$\\
\hline
I.5 &$y_{1}$ &  $1$ & $y_{12}$ & $32$ & $z_{4}$ & $30$ &$y_{13}$& $26$ & $y_{10}$ & $21$ & $y_{8}$ & $18$ & $z_{3}$ & $6$ & $z_{2}$ & $7$ & $z_1$\\
\hline
I.6 &$y_{1}$ &  $1$ & $y_{12}$ & $32$ &$y_{13}$& $29$ & $z_{4}$ & $26$ & $y_{10}$ & $21$ & $y_{8}$ & $18$ & $z_{3}$ & $6$ & $z_{2}$ & $7$ & $z_1$\\
\hline
I.7 &$y_{1}$ &  $1$ & $y_{12}$ & $32$ &$y_{13}$& $29$ & $y_{10}$ & $24$ & $z_{4}$ & $21$ & $y_{8}$ & $18$ & $z_{3}$ & $6$ & $z_{2}$ & $7$ & $z_1$\\
\hline
I.8 &$y_{1}$ &  $1$ & $y_{12}$ & $32$ &$y_{13}$& $29$ & $y_{10}$ & $24$ & $y_{8}$ & $19$ & $z_{4}$ & $18$ & $z_{3}$ & $6$ & $z_{2}$ & $7$ & $z_1$\\
\hline
\end{tabular}
\caption{Область I: порядок 3-камер и особых точек.}
\label{Tab:order_region_I}
\end{table}

\begin{table}
\centering
\begin{tabular}[t]{|c||c|c|c|c|c|c|c|c|c|c|c|c|c|}
\hline
$\varkappa = 0$ & $H$ & $U_1$ & $U_2$ & $U_3$ & $M_1$ & $M_2$ & $e_1$ & $e_2$ & $c_1$ & $c_2$ & $h_1$ & $h_2$  \\
\hline
 $\varkappa > 0$ & $y_{1}$ & $y_{3}$ & $y_{7}$ & $y_{12}$ & $y_{10}$ & $y_{11}$ & $y_{2}$ & $y_{13}$ & $y_{6}$ & $y_{9}$ & $y_{5}$ & $y_{8}$ \\
\hline
\end{tabular}
\caption{Соответствие обозначений для  семейств особых точек.}
\label{Tab:denotions_singular_points}
\end{table}

\begin{table}
\centering
\begin{tabular}{c}
\includegraphics[width=0.9\linewidth]{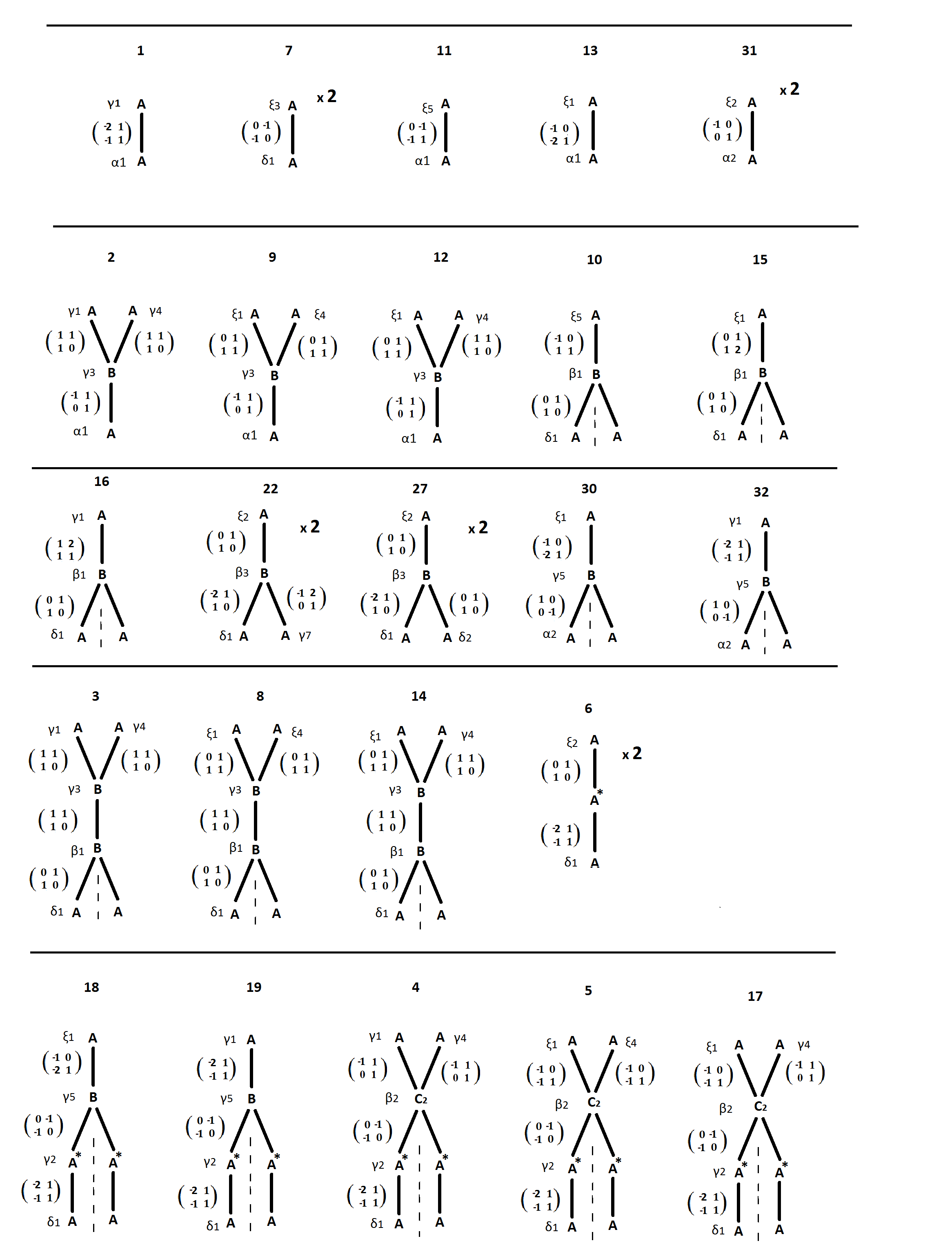}
\end{tabular}
   \caption{Список молекул с матрицами склейки. Часть 1.} \label{Tab:Molecules_matrix_1}
\end{table}

\begin{table}
\centering
\begin{tabular}{c}
\includegraphics[width=0.95\linewidth]{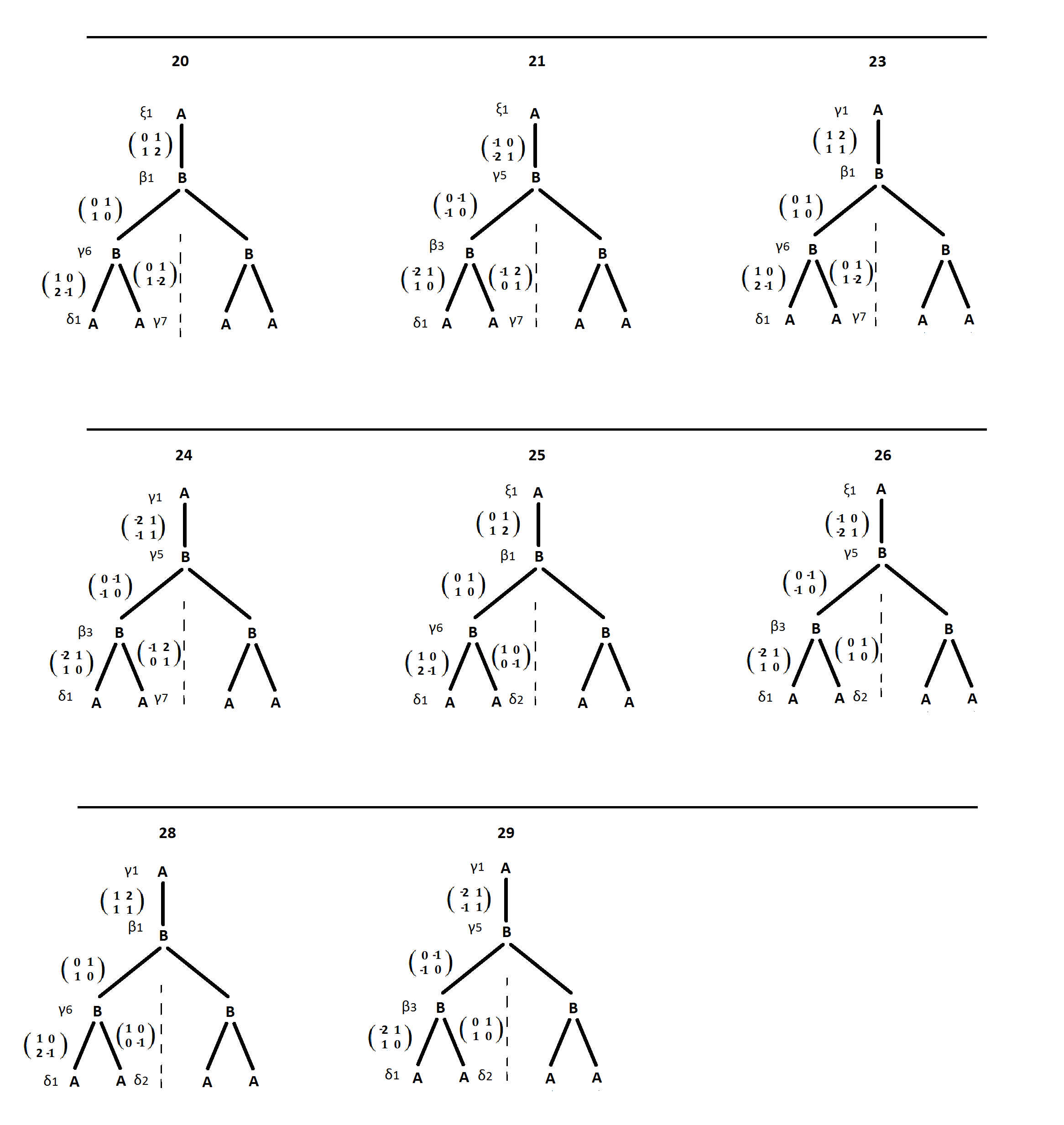}
\end{tabular}
   \caption{Список молекул с матрицами склейки. Часть 2.} \label{Tab:Molecules_matrix_2}
\end{table}

\begin{table}
\begin{tabular}[t]{|c||c|c|c|c|c|c|c|c|c|c|c|c|c|c|c|c|c|c|c|c|c|c|c|c|c|c|c|c|c|c|c|c|}
\hline
грань & $p_{1}$ & $p_{2}$ & $p_{3}$ & $p_{4}$ & $p_{5}$ & $p_{6}$  & $p_{7}$ & $p_{8}$ & $p_{9}$ & $p_{10}$ & $p_{11}$ & $p_{12}$ & $p_{13}$ & $p_{14}$   \\
\hline
точка & $y_{1}$ & $y_{2}$ & $y_{3}$ & $z_{5}$ & $z_{2}$ & $z_{1}$ & $y_{3}$ & $y_{2}$ & $z_{8}$ & $z_{10}$ & $z_{9}$ &$y_{2}$ & $z_{9}$  &  $y_{6}$ \\
\hline
выше  & 1 & 3 & 5 & 6 & 7 & $\oslash$ & 4 & 8 & 10 & 7 & 11 & 10 & $\oslash$ & 2  \\
\hline
ниже  & $\oslash$ & 2 & 8 & 5 & 6 & 7 & 3 & 9 & 8 & 10 & 9 & 11 & 11 & 1  \\
\hline
\hline
грань & $p_{15}$ & $p_{16}$ & $p_{17}$ & $p_{18}$ & $p_{19}$ &$p_{20}$  &$p_{21}$ &$p_{22}$ &$p_{23}$ &$p_{24}$ &$p_{25}$ &$p_{26}$ &$p_{27}$ &$p_{28}$ \\
\hline
точка & $z_{4}$ & $z_{6}$& $z_{4}$& $y_{6}$& $z_{11}$& $y_{2}$& $z_{6}$ & $y_{2}$ & $y_{6}$ & $z_{11}$ & $y_{3}$ & $z_{6}$ & $y_{2}$ & $z_{4}$ \\
\hline
выше  & 12 & 9 & 13 & 12 & 11 & 14 & 8  & 15 & 14 & 10 & 17 & 5 & 16 & 15 \\
\hline
ниже  & 2 & 12 & 1 & 13 & 13 & 12 & 14 & 13 & 15 & 15 & 14 & 17 & 1 & 16 \\
\hline
\hline
грань & $p_{29}$ & $p_{30}$ & $p_{31}$ & $p_{32}$ & $p_{33}$ &$p_{34}$  &$p_{35}$ &$p_{36}$ &$p_{37}$ &$p_{38}$ &$p_{39}$ &$p_{40}$ &$p_{41}$ &$p_{42}$ \\
\hline
точка & $y_{6}$ & $z_{4}$ & $z_{4}$& $y_{5}$& $z_{3}$& $y_{5}$& $z_{4}$ & $y_{9}$ & $y_{7}$ & $z_{3}$ & $y_{8}$ & $y_{8}$ & $y_{9}$ & $z_{4}$ \\
\hline
выше  & 3 & 17 & 14 & 18 & 6 & 19 & 18 & 20 & 21 & 22 & 6 & 18 & 23 & 20 \\
\hline
ниже  & 16 & 4 & 3 & 17 & 18 & 4 & 19 & 15 & 20 & 21 & 22 & 21 & 16 & 23 \\
\hline
\hline
грань & $p_{43}$ & $p_{44}$ & $p_{45}$ & $p_{46}$ & $p_{47}$ &$p_{48}$  &$p_{49}$ &$p_{50}$ &$p_{51}$ &$p_{52}$ &$p_{53}$ &$p_{54}$ &$p_{55}$ &$p_{56}$ \\
\hline
точка & $y_{7}$ & $z_{4}$ & $y_{8}$& $y_{11}$& $y_{7}$& $z_{3}$& $y_{10}$ & $y_{10}$ & $y_{10}$ & $y_{11}$ & $z_{4}$ & $y_{10}$ & $y_{7}$ & $z_{4}$ \\
\hline
выше  & 24 & 21 & 19 & 25 & 26 & 27 & 22 & 21 & 20 & 28 & 25 & 23 & 29 & 26 \\
\hline
ниже  & 23 & 24 & 24 & 15 & 25 & 26 & 27 & 26 & 25 & 16 & 28 & 28 & 28 & 29 \\
\hline
\hline
грань & $p_{57}$ & $p_{58}$ & $p_{59}$ & $p_{60}$ & $p_{61}$ &$p_{62}$ & $p_{63}$ & $p_{64}$ & $p_{65}$ & $p_{66}$ & & & & \\
\hline
точка & $y_{10}$ & $y_{12}$ & $z_{3}$ & $y_{13}$ & $y_{13}$ & $y_{12}$ &  $z_{4}$ & $y_{13}$ & $S^1$ & $S^2$ & & & & \\
\hline
выше  & 24 & 30 & 31 & 27 & 26 & 32 & 30 & 29 & $\oslash$ & $\oslash$ &  &  &  &  \\
\hline
ниже  & 29 & 13 & 30 & 31 & 30 & 1 & 32 & 32 & 13 & 31 &  &  &  &  \\
\hline
\hline
\end{tabular}
\caption{Перестройки меченых молекул для компактного случая Ковалевской.}
\label{Tab:polyhedron_facets}
\end{table}

\begin{table}
\centering
\begin{tabular}[t]{|c|c|c|c|c|c|c|c|c|c|c|c|c|c|}
\hline
 $-l$ & $+l$ & $-r$ & $+r$ & $lt$ & $rt$ & $cusp$ & $l$ & $int$ \\
\hline
 $y_{1}$ & $y_{3}, y_{7}, y_{12} z_{9}, z_{10}$ & $z_{4}$ & $z_{3}, z_{6}, z_{11}$ & $y_{5}, y_8$ & $z_2, z_{8}$ & $y_{6}, y_{9}$ &$y_{2}, y_{13}$ & $z_{5}$ \\
\hline
\end{tabular}
\caption{Семейства особых точек $\Sigma^{a, b}$  и поверхности первой серии}
\label{Tab:surface_family}
\end{table}

\begin{table}
\centering
\begin{tabular}[t]{|c||l|c|l|}
\hline
\hline
№ & формула  &$(x_1, x_2)$ & пары особых точек \\
\hline
\hline
  0 &$u(v) = 0$ & $(0, \, \infty)$ & \\
\hline
\hline
  1 &$v(u) = u$ & $(0, \, \infty)$ & $(-l, +l)_0$, $(-r, +r)_0$, $(l, lt, rt, z_1)_0$ \\
\hline
\hline
  2 &$v(u) = u +4\tau \sqrt{u} +4\tau^2$ & $(0, \, \infty)$ & $(+l, -r)_1$\\
\hline
\hline
  3 &$v(u) =  u - 4\tau \sqrt{u} +4\tau^2$ & $(0, \, \tau^2)$ & $(+l, +r)_1$\\
\hline
\hline
  4 & $v(u) = \cfrac{u^2}{\tau^2}$ & $(\tau^2, \, \infty)$ &  $(+l, lt)_0$, $(cusp, +l)_0$, $(cusp, lt)_0$\\
\hline
\hline
  5 & $v(u) = \tau \sqrt{u}$ & $(0, \, \tau^2)$ & $(+r, rt)_0$, $(cusp, +r)_0$, $(cusp, rt)_0$ \\
\hline
\hline
  7 & $v(u) = \cfrac{2u^{3/2}}{\sqrt{u}-\tau}$ & $(\tau^2, \, \infty)$ & $(lt, -r)_1$\\
\hline
\hline
  8 & $v(u) = \cfrac{2u^{3/2}}{\sqrt{u}+\tau}$ & $(\tau^2, \, \infty)$ & $(+r, lt)_1$\\
\hline
\hline
  9 & $u(v) = \tau^2$ & $\tau^2, \, \infty)$ & $(lt, +r)_1$, $(+r, int)_0$, $(lt, int)_1$ \\
\hline
\hline
  10 &$v(u) =  \tau^2$ & $(0, \, \tau^2)$ & $(+l, rt)_1$, $(+l, int)_0$, $(int, rt)_1$\\
\hline
\hline
  13 & $v(u) = 8\tau\sqrt{u}$ & $(0, \, 64\tau^2)$ & $(cusp, -r)_1$\\
\hline
\hline
  14 & $v(u) = \cfrac{1}{4}\left(\tau + \cfrac{u}{\tau}\right)^2$ & $(0, \, \tau^2)$ & $(l, +l)_1$ \\
\hline
\hline
 14 & $v(u) = \cfrac{1}{4}\left(\tau + \cfrac{u}{\tau}\right)^2$ & $(\tau^2, \, \infty)$ & $(l, +l)_1, (+l, rootl)_1 (l, rootl)_1$\\
\hline
\hline
  15 & $v(u) = 2u+2\tau \sqrt{u} +\tau^2$ & $(0, \, \infty)$ & $(l, -r)_1$\\
\hline
\hline
  16 & $v(u) = 2u- 2\tau \sqrt{u} +\tau^2$ & $(0, \, \infty)$ & $(l, +r)_1$\\
\hline
\hline
  17 & $v(u) =  \cfrac{(2u + \tau^2)^3}{27 \tau^2 u} $ & $(0, \, \infty)$ & $(cusp, l)_1$\\
\hline
\hline
  18 & $v(u) = \cfrac{\tau^2}{2}$ & $\left(0 ,\cfrac{\tau^2}{2}\right)$ & $(l, rt)_1$\\
\hline
\hline
\end{tabular}
\caption{Разделяющие кривые для особых точек первой серии.}
\label{Tab:sep_curves_2.1}
\end{table}

\begin{table}
\centering
\begin{tabular}[t]{|c||l|c|l|}
\hline
\hline
№ & формула  &$(x_1, x_2)$ & пары особых точек \\
\hline
\hline
  6 & $u(v) = \cfrac{2v^{3/2}}{\sqrt{v}+\tau}$ & $(0, \, \tau^2)$ & $(rt, +l)_1$\\
\hline
\hline
  11 & $v(u) = \cfrac{\tau^2}{2} \left(1 + \cfrac{\tau^2 /2}{u - \tau^2 /2}\right)$ & $\left(\cfrac{\tau^2}{2}, \tau^2\right)$ & $(lt, rt)_1$\\
\hline
\hline
  12 & $v(u) =  \cfrac{u^2}{64\tau^2}$ & $(64 \tau^2, \, \infty)$ & $(-l, cusp)_1$\\
\hline
\hline
  19 & ${\bf a(b)} = 3 b^{2/3} c_1^{2/3} \varkappa -\varkappa^2 c_1^2 $ & $\oslash$ & (cusp, int)\\
\hline
\hline
\end{tabular}
\caption{Кривые с конечным числом точек разделяющего множества $\Theta$.}
\label{Tab:empty_curves}
\end{table}

\clearpage

\begin{table}
\centering
\begin{tabular}[t]{||c|c|c|c|c|c|c|c|}
\hline
\hline
 номер & название & вид в $(u, v)$ & вид в $(q, s)$  \\
\hline
$f_1$ & $f_l$ & $v = u$ & $ s(q) = q^2$  \\
\hline
$f_4$ & $f_r$ & $v = \cfrac{u^2}{\tau^2}$ & $ s(q) = \cfrac{q^{4/3}}{\tau^{2/3}}$ \\
\hline
$f_{9}$ & $f_m$ & $u = \tau^2$ & $ s(q) = \cfrac{q}{\tau}$ \\
\hline
$f_{14}$ & $f_t$ & $v = \cfrac{1}{4}\left(\tau + \cfrac{u}{\tau}\right)^2$ & $ q(s) = \tau s \sqrt{-1 + \cfrac{2}{\tau \sqrt{s}}\mathstrut}$
\\
\hline
$f_{24}$ & $f_k$ & $\oslash$ & $ s(q) = - \cfrac{1}{\tau^2} (2-3\tau^{2/3}q^{2/3}) 
+ \cfrac{2}{\tau^2} \left(1 - \tau^{2/3}q^{2/3}\right)^{\frac{3}{2}}$
\\
\hline
\hline
\end{tabular}
\caption{Вид кривых в координатах  $(q, s)$.}
\label{Tab:sep_curves:q_s-type}
\end{table}

\end{document}